\newtheorem{thm}{Theorem}[section]
\def\im{\operatorname{Im}}
\def\Ass{\operatorname{Ass}}
\def\Supp{\operatorname{Supp}}
\def\coker{\operatorname{coker}}
\def\C{\mathbb C}
\def\dim{\operatorname{dim}}
\def\coker{\operatorname{coker}}
\newtheorem{cor}[thm]{Corollary}
\newtheorem{teo}[thm]{Theorem}
\newtheorem{lem}[thm]{Lemma}
\newtheorem{prop}[thm]{Proposition}
\theoremstyle{definition}
\newtheorem{ex}[thm]{Example}
\newtheorem{defi}[thm]{Definition}
\newtheorem{remark}[thm]{Remark}
\def\C{\mathbb C}
\def\dim{\operatorname{dim}}
\def\coker{\operatorname{coker}}
\begin{document}

\title[The relative Bruce-Roberts number of a function on a hypersurface]{The relative Bruce-Roberts number of a function on a hypersurface}

\author{ B. K. Lima-Pereira, J.J. Nu\~no-Ballesteros, B. Or\'efice-Okamoto, J.N. Tomazella}

\address{Departamento de Matem\'atica, Universidade Federal de S\~ao Carlos, Caixa Postal 676,
	13560-905, S\~ao Carlos, SP, BRAZIL}
\email{barbarapereira@dm.ufscar.br}

\address{Departament de Matem\`atiques,
Universitat de Val\`encia, Campus de Burjassot, 46100 Burjassot
SPAIN. Departamento de Matemática, Universidade Federal da Paraíba
		CEP 58051-900, João Pessoa - PB, Brazil}
\email{Juan.Nuno@uv.es}

\address{Departamento de Matem\'atica, Universidade Federal de S\~ao Carlos, Caixa Postal 676,
	13560-905, S\~ao Carlos, SP, BRAZIL}
\email{bruna@dm.ufscar.br}

\address{Departamento de Matem\'atica, Universidade Federal de S\~ao Carlos, Caixa Postal 676,
	13560-905, S\~ao Carlos, SP, BRAZIL}

\email{tomazella@dm.ufscar.br}

	\thanks{The first author was partially supported by CAPES.
		The second author was partially supported by MICINN Grant PGC2018--094889--B--I00 and by GVA Grant AICO/2019/024. The third author was partially supported by FAPESP Grant
		2016/25730-0. The fourth author was partially supported by CNPq Grant
		309086/2017-5 and FAPESP Grant 2018/22090-5.}
		
		\subjclass[2010]{Primary 32S25; Secondary 58K40, 32S50} \keywords{Isolated hypersurface  singularity, Bruce-Roberts number, logarithmic characteristic variety}

\maketitle

\begin{abstract}
We consider the relative Bruce-Roberts number $\mu_{BR}^{-}(f,X)$ of a function on an isolated hypersurface singularity $(X,0)$. We show that $\mu_{BR}^{-}(f,X)$ is equal to the sum of the Milnor number of the fibre $\mu(f^{-1}(0)\cap X,0)$ plus the difference $\mu(X,0)-\tau(X,0)$ between the Milnor and the Tjurina numbers of $(X,0)$. As an application, we show that the usual Bruce-Roberts number $\mu_{BR}(f,X)$ is equal to $\mu(f)+\mu_{BR}^{-}(f,X)$. We also deduce that the relative logarithmic characteristic variety $LC(X)^-$, obtained from the logarithmic characteristic variety $LC(X)$ by eliminating the component corresponding to the complement of $X$ in the ambient space, is Cohen-Macaulay.
\end{abstract}


\section{Introduction}

Let $(X,0)$ be a germ of complex analytic set in $\C^n$ and $f:(\C^n,0)\to(\C,0)$ a holomorphic function germ. The Bruce-Roberts number of $f$ with respect to $(X,0)$ was introduced by Bruce and Roberts in \cite{bruce roberts} and is defined as
\[
\mu_{BR}(f,X)=\dim_{\C}\frac{\mathcal{O}_{n}}{df(\Theta_{X})},
\]
where $\mathcal O_n$ is the local ring of holomorphic functions $(\C^n,0)\to\C$, $df$ is the differential of $f$ and $\Theta_X$ is the $\mathcal O_n$-submodule of $\Theta_n$ of vector fields on $(\C^n,0)$ which are tangent to $(X,0)$ at its regular points. If $I_X$ is the ideal of $\mathcal O_n$ of functions vanishing on $(X,0)$, then
\[
\Theta_{X}=\{\xi\in\Theta_{n}\ |\ dh (\xi)\in I_X,\ \forall h\in I_X \}.
\]
In particular, when $X=\C^n$, $df(\Theta_{X})$ is the Jacobian ideal of $f$ and thus, $\mu_{BR}(f,X)$ coincides with the classical Milnor number $\mu(f)$. 
We remark that $\Theta_X$ is also denoted in some papers by $\mbox{Der}(-\log X)$, following Saito's notation \cite{saito}.
The main properties of $\mu_{BR}(f,X)$ are the following (see \cite{bruce roberts}):
\begin{enumerate}
\item[(a)] $\mu_{BR}(f,X)$ is invariant under the action of the group $\mathcal R_X$ of diffeomorphisms $\phi:(\C^n,0)\to(\C^n,0)$ which preserve $(X,0)$;
\item[(b)] $\mu_{BR}(f,X)<\infty$ if and only if $f$ is finitely determined with respect to the $\mathcal R_X$-equivalence;
\item[(c)] $\mu_{BR}(f,X)<\infty$ if and only if $f$ restricted to each logarithmic stratum is a submersion in a punctured neighbourhood of the origin.
\end{enumerate}

In general, $\mu_{BR}(f,X)$ is not so easy to compute as the classical Milnor number. The main difficulty comes from the computation of the module $\Theta_X$ and most of the times it is necessary the use of a symbolic computer system like {\sc Singular} \cite{singular}. When $\mu_{BR}(f,X)$ is finite, the fibre $(f^{-1}(0)\cap X,0)$ is an isolated complete intersection singularity (ICIS), therefore it has well defined Milnor number. In a previous paper \cite{nunoballesteros oreficeokamoto limapereira tomazela} we considered the case that $(X,0)$ is an isolated hypersurface singularity (IHS). We showed that 
\begin{equation}\label{BR-formula}
\mu_{BR}(f,X)=\mu(f)+\mu(f^{-1}(0)\cap X,0)+\mu(X,0)-\tau(X,0),
\end{equation}
where $\mu$ and $\tau$ are the  Milnor and the Tjurina numbers, respectively. Thus, \eqref{BR-formula} gives an easy way to compute $\mu_{BR}(f,X)$ in terms of well known invariants. The formula \eqref{BR-formula} was also obtained independently in \cite{kourliouros} and previously in \cite{nunoballesteros oreficeokamoto tomazella} when $(X,0)$ is weighted homogeneous.

An important application of \eqref{BR-formula} allowed us to conclude in \cite{nunoballesteros oreficeokamoto limapereira tomazela} that the logarithmic characteristic variety $LC(X)$ is Cohen-Macaulay. We recall that $LC(X)$ is the subvariety of the cotangent bundle $T^*\C^n$ of pairs $(x,\alpha)$ such that $\alpha(\xi_x)=0$, for all $\xi\in \Theta_X$ and for all $x$ in a neighbourhood of $0$. Cohen-Macaulayness of $LC(X)$ is equivalent to that $\mu_{BR}(f,X)$ satisfies the principle of conservation for deformations of $f$ (with a fixed $X$). In particular, if $f_t$ is a Morsification of $f$, we have
\[
\mu_{BR}(f,X)=\sum_{\alpha} m_\alpha n_\alpha,
\]
where $n_\alpha$ is the number of critical points of $f_t$ restricted to each logarithmic stratum $X_\alpha$ and $m_\alpha$ is the multiplicity of $LC(X)$ along the irreducible component $Y_\alpha$ associated with $X_\alpha$. When $(X,0)$ is an IHS it always has a finite number of logarithmic strata (i.e., it is holonomic in Saito's terminology) given by $X_0=\C^n\setminus X$, $X_i\setminus\{0\}$, with $i=1,\dots,k$ and $X_{k+1}=\{0\}$, where $X_1,\dots,X_k$ are the irreducible components of $X$ at $0$. 

In this paper we are interested in another important invariant introduced in \cite{bruce roberts},
\[
\mu_{BR}^{-}(f,X)=\dim_{\C}\frac{\mathcal{O}_{n}}{df(\Theta_{X})+I_X},
\]
which we call here the relative Bruce-Roberts number. This is an invariant of the restricted function $f:(X,0)\to(\C,0)$ under the induced $\mathcal R_X$-action. In fact, as commented in \cite{bruce roberts}, it is equal to the codimension of the $\mathcal R_X$-orbit. Moreover, $\mu_{BR}^{-}(f,X)$ is finite if and only if $f$ restricted to each logarithmic stratum (excluding $X_0$) is a submersion in a punctured neighbourhood of the origin.

A natural question is about the relationship between $\mu_{BR}(f,X)$ and $\mu_{BR}^{-}(f,X)$. It is showed in \cite{bruce roberts} that if $(X,0)$ is a weighted homogeneous ICIS then
\[
\mu_{BR}^{-}(f,X)=\mu(f^{-1}(0)\cap X,0).
\]
This, combined with \eqref{BR-formula} when $(X,0)$ is a weighted homogeneous IHS, gives that
\begin{equation}\label{BR-formula2}
\mu_{BR}(f,X)=\mu(f)+\mu_{BR}^{-}(f,X).
\end{equation}

Our main result in Section 2 is that if $(X,0)$ is any IHS and $\mu_{BR}^{-}(f,X)$ is finite, then 
\begin{equation}\label{BR-formula3}
\mu_{BR}^{-}(f,X)=\mu(f^{-1}(0)\cap X,0)+\mu(X,0)-\tau(X,0).
\end{equation}
In particular, \eqref{BR-formula2} also holds when $\mu_{BR}(f,X)$ is finite, even if $(X,0)$ is not weighted homogeneous. We also show in Example \ref{ex:muf} that \eqref{BR-formula2} is not true for higher codimension ICIS.

The relative logarithmic characteristic variety $LC(X)^{-}$ is obtained from $LC(X)$ by eliminating the component $Y_0$ associated with the stratum $X_0=\C^n\setminus X$. In \cite{bruce roberts}, they showed that $LC(X)$ is never Cohen-Macaulay when $(X,0)$ has codimension $>1$ along the points on $X_0$, but $LC(X)^{-}$ is always Cohen-Macaulay when $(X,0)$ is a weighted homogeneous ICIS (of any codimension). Again, Cohen-Macaulayness of $LC(X)^{-}$ is interesting since it implies that
\[
\mu_{BR}^{-}(f,X)=\sum_{\alpha\ne0} m_\alpha n_\alpha,
\]
for any Morsification $f_t$ of $f$. As an application of \eqref{BR-formula3} we show in Section 3 that $LC(X)^{-}$ is also Cohen-Macaulay for any IHS $(X,0)$ (not necessarily weighted homogeneous).

In Section 4, we consider any holonomic variety $(X,0)$ and study characterisations of Cohen-Macaulayness of $LC(X)$ and $LC(X)^{-}$ in terms of the relative polar curve associated with a Morsification $f_t$ of $f$. Finally, in Section 5 we give a formula which generalises the classical Thom-Sebastiani formula for the Milnor number of a function defined as a sum of functions with separated variables.

\section{The Relative Bruce-Roberts Number}
The main goal of this section is to prove the equality (\ref{BR-formula3}). The next lemma is inspired in \cite[Proposition 2.8]{bivia ruas} and the proof follows by using the same ideas.
\begin{lem}\label{trivial vectors fields}
Let $(X,0)$ be an IHS determined by $\phi:(\C^{n},0)\to(\C,0)$ and $f\in\mathcal{O}_{n}$.
The map $(\phi,f):(\C^{n},0)\to(\C^{2},0)$ defines an ICIS if and only if $\mu_{BR}^{-}(f,X)<\infty$.
\end{lem}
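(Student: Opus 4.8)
The plan is to translate both conditions into statements about the zero set of an ideal and to compare them via the Nullstellensatz. Write $I_X=(\phi)$ and let $M_{ij}=\phi_{x_i}f_{x_j}-\phi_{x_j}f_{x_i}$ denote the $2\times 2$ minors of the Jacobian matrix of $(\phi,f)$; these generate the ramification ideal $I_2(D(\phi,f))$. First I would recall that $(\phi,f)$ defines an ICIS precisely when its fibre $V=(\phi,f)^{-1}(0)$ has an isolated singularity, that is, when the critical set $\Sigma(\phi,f)=V(M_{ij}:i<j)$ meets $V$ only at $0$. Hence $(\phi,f)$ is an ICIS if and only if $\dim_{\C}\mathcal O_n/((\phi,f)+(M_{ij}))<\infty$, equivalently $V((\phi,f)+(M_{ij}))=\{0\}$ as germs.

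Next I would identify the zero set of $df(\Theta_X)+I_X$. The key observation is that each Hamiltonian field $\xi_{ij}=\phi_{x_i}\partial_{x_j}-\phi_{x_j}\partial_{x_i}$ lies in $\Theta_X$, since $\xi_{ij}(\phi)=0\in(\phi)$; this is where I would invoke the circle of ideas of \cite[Proposition 2.8]{bivia ruas}, using that the partials of $\phi$ form a regular sequence because $(X,0)$ is an IHS. As $df(\xi_{ij})=M_{ij}$, we obtain $(M_{ij})+(\phi)\subseteq df(\Theta_X)+I_X$, hence $V(df(\Theta_X)+I_X)\subseteq V((M_{ij})+(\phi))$. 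For the reverse inclusion I would argue geometrically: at a smooth point $x$ of $X$ the vectors $\{\xi(x):\xi\in\Theta_X\}$ span the tangent space $T_xX$, so $x\in V((M_{ij})+(\phi))$ means exactly that $x$ is a critical point of $f|_{X_{\mathrm{reg}}}$, and there $\xi(f)(x)=df_x(\xi(x))=0$ for every $\xi\in\Theta_X$; at the origin every logarithmic field vanishes (otherwise its flow would displace the isolated singular point of $X$), so $\xi(f)(0)=0$ as well. Therefore $V(df(\Theta_X)+I_X)=V((M_{ij})+(\phi))=:C$, the critical locus of $f|_{X_{\mathrm{reg}}}$ together with $0$. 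In particular $\mu_{BR}^{-}(f,X)<\infty$ if and only if $C=\{0\}$.

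It then remains to compare $C=\{0\}$ with $C\cap f^{-1}(0)=\{0\}$. One implication is immediate, as $C\cap f^{-1}(0)\subseteq C$. For the converse I would use that $f$ is constant along $C$: on each branch of the germ $(C,0)$, whose points away from $0$ are smooth points of $X$ that are critical for $f|_X$, the restriction $f|_X$ has vanishing differential, so $f$ is locally constant there, and since every branch contains $0$ we get $f\equiv 0$ on $C$. Consequently, if $C\ne\{0\}$ then $C\subseteq f^{-1}(0)$ and $C\cap f^{-1}(0)=C\ne\{0\}$. Combining the two steps yields
\[
\mu_{BR}^{-}(f,X)<\infty \iff C=\{0\} \iff C\cap f^{-1}(0)=\{0\} \iff (\phi,f)\text{ is an ICIS},
\]
as desired; the degenerate case in which $f$ vanishes on a component of $X$ makes both sides fail and is subsumed by the same discussion.

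I expect the main obstacle to be the reverse inclusion $V((M_{ij})+(\phi))\subseteq V(df(\Theta_X)+I_X)$, namely controlling the value of $\xi(f)$ for \emph{all} of $\Theta_X$, and not merely the Hamiltonian generators $\xi_{ij}$, both along the critical locus and at the singular point $0$. This is precisely the step where the structure of $\Theta_X$ for an IHS, the regular sequence formed by the partials of $\phi$ and the vanishing of logarithmic fields at $0$, must be used with care, and where I would lean on \cite[Proposition 2.8]{bivia ruas}.
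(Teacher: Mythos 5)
Your argument is correct. The paper itself supplies no proof of this lemma---it only remarks that the proof follows the ideas of \cite[Proposition 2.8]{bivia ruas}---and what you have written is precisely a realization of those ideas: the identification of $V(df(\Theta_{X})+I_{X})$ with $V(J(f,\phi)+I_{X})$ (using that logarithmic fields are tangent to $X$ at its smooth points and vanish at the isolated singular point $0$), followed by the observation that $f$ is constant, hence zero, on each branch of that critical locus, so that intersecting with $f^{-1}(0)$ changes nothing and the two finiteness conditions coincide.
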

%

%

The following technical lemma will be used in the proof of the next theorem. Given a matrix $A$ with entries in a ring $R$, we denote by $I_k(A)$ the ideal in $R$ generated the $k\times k$ minors of $A$.

\begin{lem}\label{lema do juanjo}
Let $f,g\in\mathcal{O}_n$ be such that $\dim V(J(f,g))=1$ and $V(Jf)=\{0\}$, and consider the following matrices
$$A=\begin{pmatrix}
\tfrac{\partial f}{\partial x_{1}}&...&\tfrac{\partial f}{\partial x_{n}}\\
\tfrac{\partial g}{\partial x_{1}}&...&\tfrac{\partial g}{\partial x_{n}}
\end{pmatrix},
\quad
A'=\begin{pmatrix}
\mu&\tfrac{\partial f}{\partial x_{1}}&...&\tfrac{\partial f}{\partial x_{n}}\\
\lambda&\tfrac{\partial g}{\partial x_{1}}&...&\tfrac{\partial g}{\partial x_{n}}
\end{pmatrix},$$
where $\lambda,\mu\in \mathcal{O}_n$.
Let $M,M'$ be the submodules of $\mathcal O_n^2$ generated by the columns of  $A,A'$ respectively.
If $I_2(A)=I_2(A')$ then $M=M'$.

\end{lem}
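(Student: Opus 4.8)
The plan is to recast everything in terms of cokernels and reduce the identity $M=M'$ to a statement about associated primes. Since every column of $A$ also appears in $A'$, we trivially have $M\subseteq M'$, so the content is the reverse inclusion, i.e.\ that the extra column $v=(\mu,\lambda)^t$ lies in $M$. Set $C=\coker(A)$ and $C'=\coker(A')$, where $A$ and $A'$ are viewed as maps $\mathcal O_n^n\to\mathcal O_n^2$ and $\mathcal O_n^{n+1}\to\mathcal O_n^2$. The inclusion $M\subseteq M'$ induces a surjection $C\twoheadrightarrow C'$ whose kernel is $K=M'/M$, a submodule of $C$ generated by the class of $v$. Since both modules are presented with two generators, $\Supp(C)=V(\mathrm{Fitt}_0(C))=V(I_2(A))$ and likewise $\Supp(C')=V(I_2(A'))$; the hypothesis $I_2(A)=I_2(A')$ thus says $C$ and $C'$ have the same support, and the goal becomes $K=0$.

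The key structural input is that $C=\coker(A)$ is Cohen--Macaulay of dimension $1$. Indeed, $I_2(A)$ is the ideal of maximal minors of the $2\times n$ matrix $A$, so its codimension is at most the generic value $n-1$; the assumption $\dim V(I_2(A))=\dim V(J(f,g))=1$ forces $\codim I_2(A)=n-1$, the maximal possible. By the determinantal theory (the Buchsbaum--Rim/Eagon--Northcott complex then resolves $C$), $C$ is a perfect $\mathcal O_n$-module of grade $n-1$, hence Cohen--Macaulay of dimension $1$. In particular $C$ is unmixed: it has no embedded primes, and $\Ass(C)$ consists precisely of the minimal primes of $I_2(A)$, all of height $n-1$. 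Since $K\subseteq C$, we have $\Ass(K)\subseteq\Ass(C)$, so it suffices to prove $K_P=0$ for every minimal prime $P$ of $I_2(A)$.

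For the local computation, fix such a $P$, with $\mathrm{ht}(P)=n-1<n$, and write $R_P=(\mathcal O_n)_P$. Here the condition $V(Jf)=\{0\}$ enters decisively: $Jf$ is $\mathfrak m$-primary, hence $Jf\not\subseteq P$, so some $\partial f/\partial x_i$ is a unit in $R_P$. Elementary column operations over $R_P$ (which preserve both the submodule generated by the columns and the ideal of minors) then bring $A$ to a matrix with that unit in the $(1,i)$ entry and zeros in the rest of the first row; membership of a vector $(a,b)^t$ in $M_P$ is thereby reduced to a single congruence $b-as\in I_2(A)_P$ for an explicit $s\in R_P$. Applying the identical reduction to the extra column of $A'$ shows that $M'_P=M_P+R_P\cdot(0,\lambda-\mu s)^t$ and that $I_2(A')_P=I_2(A)_P+(\lambda-\mu s)$; the hypothesis $I_2(A)_P=I_2(A')_P$ then yields $\lambda-\mu s\in I_2(A)_P$, which by the membership criterion means exactly $(0,\lambda-\mu s)^t\in M_P$, so $M'_P=M_P$ and $K_P=0$. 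Combining this over all such $P$ gives $\Ass(K)=\varnothing$, whence $K=0$ and $M=M'$. The main obstacle is the second paragraph, namely establishing that $\coker(A)$ is Cohen--Macaulay (equivalently, unmixed) so that the associated primes are controlled; once that is in place, the local step is a routine column reduction.
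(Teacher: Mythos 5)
Your proposal is correct and follows essentially the same route as the paper: both establish that $\coker(A)$ is Cohen--Macaulay of dimension one via the Buchsbaum--Rim theory, use unmixedness to control the associated primes of $M'/M$, and then kill the quotient by a column reduction exploiting that some $\partial f/\partial x_i$ is invertible away from the origin. The only (cosmetic) difference is that you localize at the minimal primes of $I_2(A)$ and conclude $\Ass(M'/M)=\varnothing$, whereas the paper localizes at points $x\neq 0$ to show $\Supp(M'/M)\subseteq\{0\}$ and then contradicts $\dim(M'/M)=1$; the two are interchangeable.
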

\begin{proof}
We see $A$ and $A'$ as homomorphims of modules over $R:=\mathcal O_n$:
\[
A\colon R^n\longrightarrow R^2,\quad A'\colon R^{n+1}\longrightarrow R^2.
\]
We consider the $R$-module $R^{2}/M=\coker(A)$, which has support $V(I_2(A))=V(J(f,g))$. Therefore, $\dim(R^2/M)=1=n-(n-2+1)$ and hence it is Cohen-Macaulay (see \cite{BR}). In particular, it is unmixed.
Now, $M'/M$ is a submodule of $R^2/M$, so the associated primes $\Ass (M'/M)$ are included in $\Ass(R^{2}/M)$. If $M'/M\ne0$ then $\Ass (M'/M)\ne\emptyset$ and it follows that $\dim(M'/M)=1$.

Let $U$ be a neighbourhood of $0$ in $\C^{n}$ such that $0$ is the only critical point of $f$. For all  $x\in U\setminus\{0\}$, there exist $i_{0}\in\{1,...,n\}$, such that $\partial f/\partial x_{i_{0}}(x)\neq 0$.
We may suppose $i_{0}=1$.
Making elementary column operations in the matrices $A$ and $A'$
we obtain  
$$B=\begin{pmatrix}
1  &0&\dots   &  0 \\
c_1&c_{2}& \dots & c_n  
\end{pmatrix},
\quad
B'=\begin{pmatrix}
\mu & 1 &0&\dots   &  0 \\
\lambda & c_1&c_{2}  &  \dots & c_n  
\end{pmatrix}$$
such that $$I_{2}(A)=I_{2}(B),\;I_{2}(A')=I_{2}(B'),\;\im(A)=\im(B)\textup{ and }\im(A')=\im(B').$$

By hypothesis $I_{2}(A)=I_{2}(A')$ and consequently  $\langle c_{2},...,c_{n}\rangle=\langle\mu c_{1}-\lambda,c_{2},...,c_{n}\rangle.$
This implies $\lambda=\mu c_{1}+\alpha_{2}c_{2}+...+\alpha_{n}c_{n}$, for some
$\alpha_{2},...,\alpha_{n}\in R$. Thus, 
\[
\left(\begin{array}{c} \mu \\ \lambda\end{array}\right)=\mu \left(\begin{array}{c} 1 \\ c_1\end{array}\right)+
\alpha_2 \left(\begin{array}{c} 0 \\ c_2\end{array}\right)+\dots+
\alpha_n \left(\begin{array}{c} 0 \\ c_n\end{array}\right).
\]
 and hence $\left(M'/M\right)_{x}=0$. This shows that $\Supp(M'/M)\subset\{0\}$ and hence, $M'=M$.
\end{proof}


Given an IHS $(X,0)$ defined by a holomorphic function germ $\phi:(\C^{n},0)\to(\C,0),$ we consider the  $\mathcal{O}_{n}$-submodule of the trivial vectors fields, denoted by  $\Theta_{X}^{T}$, generated by  $$\phi\frac{\partial}{\partial x_{i}},\frac{\partial\phi}{\partial x_{j}}\frac{\partial}{\partial x_{k}}-\frac{\partial\phi}{\partial x_{k}}\frac{\partial}{\partial x_{j}},\text{ with }i,j,k=1,...,n;k\neq j.$$ This module was related to the Tjurina number of $(X,0)$ in  \cite{nunoballesteros oreficeokamoto limapereira tomazela, tajima}. By using different approaches, it is showed that $\tau(X,0)=\dim_{\C}\Theta_{X}/\Theta_{X}^{T}$. Moreover, in \cite{nunoballesteros oreficeokamoto limapereira tomazela} we also proved that $\tau(X,0)=\dim_{\C}df(\Theta_{X})/df(\Theta_{X}^{T})$ where $f$ is any $\mathcal{R}_{X}$-finitely determined function germ. The following result generalizes this equality with weaker hypothesis on $f$.

%

\begin{teo}\label{resultados juntos}
Let $(X,0)$ be an IHS determined by $\phi:(\C^{n},0)\to(\C,0)$ and $f\in\mathcal{O}_{n}$ such that $\mu_{BR}^{-}(f,X)<\infty$, then:
\begin{enumerate}[label=\emph{(\roman*)}]
\medskip
\item $\frac{\Theta_{X}}{\Theta_{X}^{T}}\approx\frac{df(\Theta_{X})+I_{X}}{df(\Theta_{X}^{T})+I_{X}};$

\medskip
\item $\frac{\Theta_{X}}{\Theta_{X}^{T}}\approx\frac{df(\Theta_{X})}{df(\Theta_{X}^{T})};$

\medskip
\item $df(\Theta_{X})\cap I_{X}=JfI_{X};$

\medskip
\item $\frac{\mathcal{O}_{n}}{Jf}\approx\frac{df(\Theta_{X}^{-})}{df(\Theta_{X})};$

\medskip
\item $df(\Theta_{X}):I_{X}=Jf;$

\medskip
\item $df(\Theta_{X}^{T}):I_{X}=Jf,$
\end{enumerate}
where $I_{X}$ is the ideal generated by $\phi$.
\end{teo}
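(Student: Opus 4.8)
The backbone of the argument is the $\mathcal{O}_n$-linear map
\[
\Phi\colon\Theta_n\longrightarrow\mathcal{O}_n^2,\qquad \xi\longmapsto(df(\xi),\xi(\phi)),
\]
whose matrix with respect to the standard frames is precisely the matrix $A$ of Lemma \ref{lema do juanjo} taken with $g=\phi$; thus $\im\Phi$ equals the column module $M$ of $A$, and $\Theta_X=\{\xi\in\Theta_n : \xi(\phi)\in I_X\}$. First I would record the images of the generators of $\Theta_X^T$: one has $\Phi(\phi\,\frac{\partial}{\partial x_i})=\phi\cdot(\text{$i$-th column of }A)$ and $\Phi(\frac{\partial\phi}{\partial x_j}\frac{\partial}{\partial x_k}-\frac{\partial\phi}{\partial x_k}\frac{\partial}{\partial x_j})=(-\Delta_{jk},0)$, where $\Delta_{jk}$ is the $2\times2$ minor of $A$ on columns $j,k$. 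This yields at once the containments $\phi\,Jf=Jf\,I_X\subseteq df(\Theta_X)\cap I_X$ and $J(f,\phi)=I_2(A)\subseteq df(\Theta_X^T)$, together with the description $df(\Theta_X^T)=\phi\,Jf+J(f,\phi)$; in particular the easy inclusion of (iii) is immediate.

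The heart of the theorem is the reverse inclusion $df(\Theta_X)\cap I_X\subseteq Jf\,I_X$, equivalently the colon identity (v), $df(\Theta_X):I_X\subseteq Jf$ (the two agree because $\mathcal{O}_n$ is a domain). So let $\phi b=df(\xi)$ with $\xi\in\Theta_X$; writing $\xi(\phi)=\phi a$ we get $\Phi(\xi)=\phi\,(b,a)\in M$, say $\phi\,(b,a)=\sum_j c_j\,(\text{$j$-th column of }A)$. Adjoining the column $(b,a)$ to $A$ produces the matrix $A'$ of Lemma \ref{lema do juanjo}, and a one-line computation gives $\phi\,\big(b\,\frac{\partial\phi}{\partial x_i}-a\,\frac{\partial f}{\partial x_i}\big)=\sum_j c_j\,\Delta_{ji}\in J(f,\phi)$ for every $i$. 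Now by the finiteness criterion, $f$ is submersive on every logarithmic stratum other than $X_0$ off the origin, which forces $V(J(f,\phi))\cap X=\{0\}$, so no branch of the one-dimensional Cohen--Macaulay germ $V(J(f,\phi))$ lies in $X=V(\phi)$; hence $\phi$ is a nonzerodivisor on $\mathcal{O}_n/J(f,\phi)$ and may be cancelled, giving $b\,\frac{\partial\phi}{\partial x_i}-a\,\frac{\partial f}{\partial x_i}\in J(f,\phi)=I_2(A)$ for all $i$, i.e. $I_2(A')=I_2(A)$. Lemma \ref{lema do juanjo} then gives $M'=M$, so $(b,a)\in M$ and therefore $b\in Jf$. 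This settles (iii) and (v); statement (vi) follows verbatim once one notes $df(\Theta_X^T)\cap I_X=Jf\,I_X$, which combines $\phi Jf\subseteq df(\Theta_X^T)\subseteq df(\Theta_X)$ with (iii).

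Everything else is formal. For (iv) I would invoke $df(\Theta_X^-)=df(\Theta_X)+I_X$ (the defining property of the relative module $\Theta_X^-$) and the second isomorphism theorem together with (iii):
\[
\frac{df(\Theta_X^-)}{df(\Theta_X)}=\frac{df(\Theta_X)+I_X}{df(\Theta_X)}\cong\frac{I_X}{I_X\cap df(\Theta_X)}=\frac{I_X}{Jf\,I_X}\cong\frac{\mathcal{O}_n}{Jf},
\]
the last isomorphism being multiplication by $\phi$, injective since $\mathcal{O}_n$ is a domain. For (i) and (ii), the map $df$ induces a surjection $\Theta_X/\Theta_X^T\twoheadrightarrow df(\Theta_X)/df(\Theta_X^T)$ with kernel $(\Theta_X^T+(\ker df\cap\Theta_X))/\Theta_X^T$; and (iii) shows $df(\Theta_X)\cap(df(\Theta_X^T)+I_X)=df(\Theta_X^T)$, so the canonical surjection $df(\Theta_X)/df(\Theta_X^T)\to(df(\Theta_X)+I_X)/(df(\Theta_X^T)+I_X)$ is an isomorphism and (i) and (ii) become equivalent. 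Thus it remains only to prove the injectivity $\ker df\cap\Theta_X\subseteq\Theta_X^T$; combined with the surjection and the equality $\dim_\C\Theta_X/\Theta_X^T=\tau(X,0)$ recalled just before the theorem, it is enough to check $\dim_\C df(\Theta_X)/df(\Theta_X^T)=\tau(X,0)$, upgrading the known $\mathcal{R}_X$-finite case by the same nonzerodivisor mechanism, applied now to the syzygies of $f$ tangent to $X$.

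I expect the real difficulty to sit in the two places where the ambient module structure intervenes. The delicate point in (iii) is that Lemma \ref{lema do juanjo} is stated with the hypothesis $V(Jf)=\{0\}$, which $\mu_{BR}^-(f,X)<\infty$ does \emph{not} guarantee, since $f$ may well have critical points off $X$; I would circumvent this by proving the ideal identity locally, observing that away from $X$ the germ $\phi$ is a unit and both sides collapse to $Jf$, that on $X\setminus\{0\}$ the function $f$ has no critical points (again by finiteness of $\mu_{BR}^-$) so the column reduction in the proof of Lemma \ref{lema do juanjo} remains valid, leaving the origin as the only essential case, where the Cohen--Macaulay unmixedness of $\mathcal{O}_n^2/M$ and the nonzerodivisor property of $\phi$ modulo $J(f,\phi)$ do the work. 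The second obstacle is the injectivity $\ker df\cap\Theta_X\subseteq\Theta_X^T$ underlying (i)/(ii): the surjection reduces matters to the single dimension equality above, but establishing that equality without assuming $\mathcal{R}_X$-finite determinacy is exactly what requires the more careful, localized use of the Cohen--Macaulay structure.
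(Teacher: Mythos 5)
Your treatment of (iii), (iv), (v), (vi) is sound and is essentially the paper's own argument: adjoin the column $(b,a)$ to $A$, cancel $\phi$ modulo $J(f,\phi)$ using that $(\phi,f)$ is an ICIS, and invoke Lemma \ref{lema do juanjo} to pass from $I_2(A')=I_2(A)$ to $M'=M$. One small remark: your worry about the hypothesis $V(Jf)=\{0\}$ in Lemma \ref{lema do juanjo} dissolves if you apply the lemma with $\phi$ in the role of its ``$f$'' (swap the rows): the only thing the lemma's support argument needs is that one of the two row gradients has an isolated zero, and $V(J\phi)=\{0\}$ holds because $(X,0)$ is an IHS. Your localized workaround is correct but unnecessary.

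The genuine gap is in (i)/(ii). You correctly reduce both statements to the injectivity $\ker df\cap\Theta_X\subseteq\Theta_X^T$ (more precisely, for (i), to the claim that any $\xi\in\Theta_X$ with $df(\xi)\in df(\Theta_X^T)+I_X$ lies in $\Theta_X^T$), but you do not prove it: you only assert that the dimension equality should be ``upgraded by the same nonzerodivisor mechanism applied to the syzygies of $f$''. This is not an argument, and the diagnosis is off. The matrix mechanism does apply verbatim here: writing $df(\xi-\eta)=\mu\phi$ and $d\phi(\xi-\eta)=\lambda\phi$ for suitable $\eta\in\Theta_X^T$, the same cancellation of $\phi$ and Lemma \ref{lema do juanjo} give $(\mu,\lambda)\in M$, hence $\lambda\in J\phi$. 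But that is where it stops: from $\lambda\in J\phi$ one still needs a separate ingredient to conclude $\xi-\eta\in\Theta_X^T$, and this is exactly what the paper imports as Lemma 3.1 of \cite{nunoballesteros oreficeokamoto limapereira tomazela}. Concretely, writing $\lambda=\sum_i a_i\,\partial\phi/\partial x_i$, the field $\zeta=\xi-\eta-\sum_i a_i\phi\,\partial/\partial x_i$ satisfies $d\phi(\zeta)=0$; since $\partial\phi/\partial x_1,\dots,\partial\phi/\partial x_n$ is a regular sequence ($(X,0)$ is an IHS), the syzygies of $\nabla\phi$ are Koszul, so $\zeta$ lies in the module generated by the Hamiltonian fields and hence $\xi-\eta\in\Theta_X^T$. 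Without this syzygy step your proof of (i) and (ii) does not close, and no ``localized Cohen--Macaulay'' reasoning about $f$ substitutes for it: the regular sequence that matters at this point is $\nabla\phi$, not $\nabla f$.
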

\begin{proof}
(i) The homomorphism $\Psi:\Theta_{X}\to df(\Theta_{X})+I_{X}$ defined by $\Psi(\xi)=df(\xi)$
induces the isomorphism
%
 $$\overline{\Psi}:\frac{\Theta_{X}}{\Theta_{X}^{T}}\to\frac{df(\Theta_{X})+I_{X}}{df(\Theta_{X}^{T})+I_{X}}.$$
 In fact, it is enough to show that $\Psi^{-1}(df(\Theta_{X}^{T})+I_{X})\subset \Theta_{X}^{T}.$ Let $\xi\in\Psi^{-1}(df(\Theta_{X}^{T})+I_{X})$ then $\Psi(\xi)\in df(\Theta_{X}^{T})+I_{X}$, that is, there exist $\eta \in\Theta_{X}^{T}$ and $\mu, \lambda\in\mathcal{O}_{n}$, such that

$$\left\{ \begin{array}{cc}
df(\xi-\eta)=\mu\phi\\
d\phi(\xi-\eta)=\lambda\phi
\end{array}\right.,$$
then  
$$\left(\begin{array}{c}
\mu\phi\\
\lambda\phi
\end{array}\right)\in\left\langle\left(\begin{array}{c}
\tfrac{\partial f}{\partial x_{i}}\vspace{0.1cm}\\
\tfrac{\partial \phi}{\partial x_{i}}
\end{array}\right) \; i=1,...,n\right\rangle$$
 and $$I_{2}\begin{pmatrix}\mu\phi&\tfrac{\partial f}{\partial x_{1}}&...&\tfrac{\partial f}{\partial x_{n}}\vspace{0.1cm}\\
 \lambda\phi&\tfrac{\partial \phi}{\partial x_{1}}&...&\tfrac{\partial \phi}{\partial x_{n}} \end{pmatrix}=I_{2}\begin{pmatrix}\tfrac{\partial f}{\partial x_{1}}&...&\tfrac{\partial f}{\partial x_{n}}\vspace{0.1cm}\\
 \tfrac{\partial \phi}{\partial x_{1}}&...&\tfrac{\partial \phi}{\partial x_{n}} \end{pmatrix}=J(f,\phi).$$
 
 Therefore $$\left|\begin{array}{cc}
\mu&\tfrac{\partial f}{\partial x_{i}}\vspace{0.1cm}\\
\lambda&\tfrac{\partial \phi}{\partial x_{i}}
\end{array}\right|\phi\in J(f,\phi)$$
and since $\phi$ is regular in $\frac{\mathcal{O}_{n}}{J(f,\phi)}$ then
 $$\left|\begin{array}{cc}
\mu&\tfrac{\partial f}{\partial x_{i}}\vspace{0.1cm}\\
\lambda&\tfrac{\partial \phi}{\partial x_{i}}
\end{array}\right|\in J(f,\phi),\; i=1,...,n.$$

By Lemma \ref{lema do juanjo}, $\lambda\in J\phi$ and using \cite[Lemma 3.1]{nunoballesteros oreficeokamoto limapereira tomazela}, $\xi\in\Theta_{X}^{T}.$


\medskip
(ii) This equality also was proved in \cite{nunoballesteros oreficeokamoto limapereira tomazela} with the additional hypothesis that $f$ is $\mathcal{R}_{X}$-finitely determined.  

The epimorphism $\psi:\Theta_{X}\to df(\Theta_{X})$ defined by $\psi(\xi)=df(\xi)$
induces the ismorphism

 \begin{align*}
\overline{\psi}:\frac{\Theta_{X}}{\Theta_{X}^{T}}&\to \frac{df(\Theta_{X})}{df(\Theta_{X}^{T})}.\\
\end{align*}
In fact, let $\xi\in\ker(\psi)$, then there exist $\lambda\in\mathcal{O}_{n}$, such that 
$$\left\{ \begin{array}{cc}
df(\xi)=0\\
d\phi(\xi)=\lambda\phi
\end{array}\right.$$
The rest is similar to the proof of (i).

\medskip
(iii)
Let $\xi\in\Theta_{X}$ be such that $df(\xi)\in I_{X}$, then there exist $\mu,\lambda\in\mathcal{O}_{n}$, such that 

$$\left\{ \begin{array}{cc}
df(\xi)=\mu\phi\\
d\phi(\xi)=\lambda\phi
\end{array}\right.$$
Using the same techniques of the proof of (i) we have
%
%
%
%
%
$$df(\Theta_{X})\cap I_{X}\subset Jf I_{X}.$$ The other inclusion is immediate.

\medskip
(iv) It follows from the isomomorphisms  $$\frac{df(\Theta_{X}^{-})}{df(\Theta_{X})}=\frac{df(\Theta_{X})+I_{X}}{df(\Theta_{X})}\approx \frac{I_{X}}{df(\Theta_{X})\cap I_{X}}\stackrel{(iii)}{=}\frac{I_{X}}{JfI_{X} }\approx\frac{\mathcal{O}_{n}}{Jf}.$$

\medskip
(v) It follows from (iii).

\medskip
(vi) It follows from (v) and $Jf\subset df(\Theta_{X}^{T}):I_{X}$.

\end{proof}

\begin{remark}The items (ii) and (iv) of Theorem \ref{resultados juntos} seem a bit peculiar since  from (iv) the quotient $df(\Theta_{X}^{-})/df(\Theta_{X})$ does not depend on $(X,0)$ while from (ii), $df(\Theta_{X})/df(\Theta_{X}^{T})$ does not depend on $f$. Moreover by \cite{nunoballesteros oreficeokamoto limapereira tomazela, tajima} if $(X,0)$ is an IHS determined by $\phi:(\C^{n},0)\to(\C,0)$, then $\dim_{\C}\tfrac{\Theta_{X}}{\Theta_{X}^{T}}=\tau(X,0),$ therefore   $$\dim_{\C}\frac{df(\Theta_{X})+I_{X}}{df(\Theta_{X}^{T})+I_{X}}=\dim_{\C}\frac{df(\Theta_{X})}{df(\Theta_{X}^{T})}=\tau(X,0).$$  \end{remark}

%
%


The next theorem is one of the main results of this work.

\begin{teo}\label{caracterizacao para o numero de bruce roberts extendido}
Let $(X,0)$ is an IHS determined by $\phi:(\C^{n},0)\to(\C,0)$ and $f\in\mathcal{O}_{n}$ a function germ such that $\mu_{BR}^{-}(f,X)<\infty$. Then $(\phi,f)$ defines an ICIS and $$\mu(f^{-1}(0)\cap X,0)=\mu_{BR}^{-}(f,X)+\tau(X,0)-\mu(X,0).$$
\end{teo}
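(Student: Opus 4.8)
The plan is to derive the formula by combining the L\^e--Greuel formula for the ICIS $(\phi,f)$ with the already established Theorem~\ref{resultados juntos} and the Remark following it. First, since $\mu_{BR}^{-}(f,X)<\infty$, Lemma~\ref{trivial vectors fields} gives immediately that $(\phi,f)\colon(\C^{n},0)\to(\C^{2},0)$ defines an ICIS; in particular both $(X,0)=V(\phi)$ and $(f^{-1}(0)\cap X,0)=V(\phi,f)$ have well-defined Milnor numbers, which settles the first assertion. Applying the L\^e--Greuel formula to the tower $V(\phi)\supset V(\phi,f)$ then yields
\[
\mu(X,0)+\mu(f^{-1}(0)\cap X,0)=\dim_{\C}\frac{\mathcal{O}_{n}}{I_{X}+J(f,\phi)},
\]
where $J(f,\phi)=I_{2}(A)$ is the ideal generated by the $2\times2$ minors of the Jacobian matrix of $(f,\phi)$. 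Thus the whole problem reduces to computing $\dim_{\C}\mathcal{O}_{n}/(I_{X}+J(f,\phi))$.

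The key observation is that this ideal coincides with $df(\Theta_{X}^{T})+I_{X}$. Indeed, $\Theta_{X}^{T}$ is generated by the fields $\phi\,\partial/\partial x_{i}$ and $(\partial\phi/\partial x_{j})\,\partial/\partial x_{k}-(\partial\phi/\partial x_{k})\,\partial/\partial x_{j}$; applying $df$ sends the first family into $I_{X}$, since $df(\phi\,\partial/\partial x_{i})=\phi\,\partial f/\partial x_{i}$, and sends the second family onto the generators $(\partial\phi/\partial x_{j})(\partial f/\partial x_{k})-(\partial\phi/\partial x_{k})(\partial f/\partial x_{j})$ of $J(f,\phi)$ (up to sign). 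Hence $df(\Theta_{X}^{T})+I_{X}=J(f,\phi)+I_{X}$, and additivity of $\dim_{\C}$ along the chain $df(\Theta_{X}^{T})+I_{X}\subseteq df(\Theta_{X})+I_{X}\subseteq\mathcal{O}_{n}$ gives
\[
\dim_{\C}\frac{\mathcal{O}_{n}}{I_{X}+J(f,\phi)}=\dim_{\C}\frac{\mathcal{O}_{n}}{df(\Theta_{X})+I_{X}}+\dim_{\C}\frac{df(\Theta_{X})+I_{X}}{df(\Theta_{X}^{T})+I_{X}}.
\]
Here the first summand is $\mu_{BR}^{-}(f,X)$ by definition, and the second is $\tau(X,0)$ by the Remark following Theorem~\ref{resultados juntos} (equivalently, by item (i) of that theorem together with $\dim_{\C}\Theta_{X}/\Theta_{X}^{T}=\tau(X,0)$).

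Combining the two displays yields $\mu(X,0)+\mu(f^{-1}(0)\cap X,0)=\mu_{BR}^{-}(f,X)+\tau(X,0)$, which rearranges into the claimed identity. The point requiring the most care is the application of the L\^e--Greuel formula: one must verify its hypotheses, namely that $(\phi)$ and $(\phi,f)$ define ICIS of the expected dimensions $n-1$ and $n-2$. The first holds because $(X,0)$ is an IHS, so $V(J\phi)=\{0\}$, and the second is exactly the conclusion of Lemma~\ref{trivial vectors fields}. Everything else is a formal dimension count, so the substantive content is carried entirely by Theorem~\ref{resultados juntos} and by the identification $df(\Theta_{X}^{T})+I_{X}=J(f,\phi)+I_{X}$ that feeds into the L\^e--Greuel formula.
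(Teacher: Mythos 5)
Your proposal is correct and follows essentially the same route as the paper: the paper's short exact sequence is exactly your additivity of $\dim_{\C}$ along the chain $df(\Theta_{X}^{T})+I_{X}\subseteq df(\Theta_{X})+I_{X}\subseteq\mathcal{O}_{n}$, the identification $df(\Theta_{X}^{T})+I_{X}=J(f,\phi)+I_{X}$ matches the paper's observation that $df(\Theta_{X}^{T})=J(f,\phi)+Jf\,I_{X}$, and both arguments conclude via the L\^e--Greuel formula together with Theorem~\ref{resultados juntos}~(i). No gaps.
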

\begin{proof}
We consider the exact sequence
$$0\longrightarrow \frac{df(\Theta_{X}^{-})}{df(\Theta_{X}^{T})+I_{X}}\stackrel{i}{\longrightarrow}\frac{\mathcal{O}_{n}}{df(\Theta_{X}^{T})+I_{X}}\stackrel{\pi}{\longrightarrow}\frac{\mathcal{O}_{n}}{df(\Theta_{X}^{-})}\longrightarrow 0.$$
Since $(X,0)$ is an IHS $$df(\Theta_{X}^{T})=J(f,\phi)+JfI_{X},$$ hence
\begin{align*}
\mu_{BR}^{-}(f,X)&=\dim_{\C}\frac{\mathcal{O}_{n}}{J(f,\phi)+I_{X}}-\dim_{\C}\frac{df(\Theta_{X})+I_{X}}{df(\Theta_{X}^{T})+I_{X}}\\
                &=\mu(f^{-1}(0)\cap X,0)+\mu(X,0)-\tau(X,0).
\end{align*}
The last equality is a consequence of the Lê-Greuel formula \cite{brieskorn greuel} and Theorem \ref{resultados juntos} (i).
\end{proof}



\section{The Relative Bruce-Roberts number of a function with isolated singularity}

In this section, $(X,0)$ is an IHS and $f\in\mathcal{O}_{n}$ is a function germ $\mathcal{R}_{X}$-finitely determined, then all the results in the previous section are true in this case. In particular from  (iv) of Theorem \ref{resultados juntos}

\begin{equation}\label{essa} \mu(f)=\dim_{\C}\frac{df(\Theta_{X}^{-})}{df(\Theta_{X})}.
\end{equation}
Therefore, by the exact sequence
$$0\longrightarrow \frac{df(\Theta_{X}^{-})}{df(\Theta_{X})}\stackrel{i}{\longrightarrow}\frac{\mathcal{O}_{n}}{df(\Theta_{X}}\stackrel{\pi}{\longrightarrow}\frac{\mathcal{O}_{n}}{df(\Theta_{X}^{-})}\longrightarrow 0,$$
we conclude that

$$\mu_{BR}(f,X)=\mu(f)+\mu_{BR}^{-}(f,X).$$

 The following example shows that the characterization of the Milnor number (\ref{essa}) is not true anymore when $(X,0)$ is an ICIS with codimension higher than one.  
\begin{ex}\label{ex:muf}
Let $(X,0)$ be an ICIS determined by $\phi(x,y,z)=(x^3+x^2y^2+y^7+z^3,xyz)$, and $f(x,y,z)=xy-z^4$,  $f$ is a $\mathcal{R}_{X}$-finitely determined and $$3=\mu(f)\neq\dim_{\C}\frac{df(\Theta_{X}^{-})}{df(\Theta_{X})}=6.$$
\end{ex}

As a consequence of the characterization of the Milnor number (\ref{essa}), we prove that $LC(X)^{-}$ is Cohen-Macaulay when $(X,0)$ is an IHS.

The logarithmic characteristic variety, $LC(X)$, is defined as follows. 
Suppose the vector fields $\delta_1,\dots ,\delta_m$ generate $\Theta_X$ on some neighborhood $U$ of $0$ in $\C^n$. Let $T^*_U\C^n$ be the restriction of the cotangent bundle of $\C^n$ to $U$. We define $LC_U(X)$ to be 
$$
LC_U(X)=\{(x,\xi)\in T^*_U\C^n:\xi(\delta_i(x))=0, i=1,\dots ,m\}.
$$ 
Then $LC(X)$ is the germ of $LC_U(X)$ in $T^*\C^n$ along $T^*_0\C^n$, the cotangent space to $\C^n$ at $0$.
As $LC(X)$ is independent of the choice of the vector fields $\delta_i$ then it is a well defined germ of analytic subvariety in $T^*\C^n$ (see \cite{bruce roberts,saito}).
  
If $(X,0)$ is holonomic with logarithmic strata $X_0,\dots,X_k$ then $LC(X)$ has dimension $n$, and its irreducible components are $Y_0,\dots,Y_k$, with $Y_i=\overline{N^*X_i}$ as set-germs, where $\overline{N^*X_i}$ is the closure of the conormal bundle $N^*X_i$ of $X_i$ in $\C^n$ (see \cite[Proposition 1.14]{bruce roberts}). 

%

When $(X,0)$ has codimension higher than one Bruce and Roberts proved that $LC(X)$ is not Cohen-Macaulay. Then they consider the subspace of $LC(X)$ obtained by deleting the component $Y_{0}$, that correspond to the stratum $X_0= \C^{n}\setminus X$, that is  $$LC(X)^{-}=\bigcup_{i=1}^{k+1}Y_{i}$$ and as set-germs,
$$LC(X)^{-}=\bigcup_{i=1}^{k+1}\overline{N^{*}X_{i}}.$$
An interesting fact about $LC(X)^{-}$ is that it may be Cohen-Macaulay even when $LC(X)$ is not Cohen-Macaulay, for example
if $(X,0)$ is a weighted homogeneous ICIS, then $LC(X)^{-}$ is Cohen-Macaulay, \cite{bruce  roberts}. 

%
%

\begin{prop}
Let $(X,0)$ be an IHS, then $LC(X)^{-}$ is  Cohen-Macaulay.
\end{prop}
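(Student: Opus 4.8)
The plan is to establish Cohen-Macaulayness of $LC(X)^{-}$ by exhibiting $\mu_{BR}^{-}(f,X)$ as a length that is preserved under deformation, which is the standard bridge between a numerical invariant and the Cohen-Macaulayness of the variety carrying it. The key structural fact to exploit is that $LC(X)^{-}$ is a germ of dimension $n$ sitting inside the $2n$-dimensional cotangent bundle $T^*\C^n$, and that a function germ $f$ with $\mu_{BR}^{-}(f,X)<\infty$ determines, via its differential, a section $df$ of $T^*\C^n$ meeting $LC(X)^{-}$ in an isolated point over $0$. The intersection multiplicity of $df$ with $LC(X)^{-}$ at this point is exactly $\mu_{BR}^{-}(f,X)=\dim_{\C}\mathcal{O}_n/(df(\Theta_X)+I_X)$, because $df(\Theta_X)+I_X$ is precisely the ideal cutting out the scheme-theoretic fibre of the projection $LC(X)^{-}\to\C^n$ pulled back along $df$.

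First I would recall that Cohen-Macaulayness of a variety of dimension $n$ is equivalent to the condition that, for a suitable regular sequence, the quotient has finite length equal to the intersection multiplicity, and that this equals the generic value obtained after a small perturbation. Concretely, following the Bruce--Roberts framework, $LC(X)^{-}$ is Cohen-Macaulay if and only if for every (or equivalently one) $\mathcal{R}_X$-finitely determined $f$ the multiplicity $\mu_{BR}^{-}(f,X)$ is conserved: under a deformation $f_t$ of $f$, the number $\mu_{BR}^{-}(f,X)$ must equal the sum $\sum_{\alpha\neq 0} m_\alpha n_\alpha$ counted over the critical points of $f_t$ on the logarithmic strata. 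The formula \eqref{BR-formula3}, namely $\mu_{BR}^{-}(f,X)=\mu(f^{-1}(0)\cap X,0)+\mu(X,0)-\tau(X,0)$, already proved in Theorem \ref{caracterizacao para o numero de bruce roberts extendido}, is what makes this conservation transparent, since every term on the right is itself an invariant with a known conservation principle under deformation.

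The cleanest route I would take is to reduce directly to the already-established Cohen-Macaulayness of $LC(X)$. Since $(X,0)$ is an IHS, it has codimension one, and in our previous paper \cite{nunoballesteros oreficeokamoto limapereira tomazela} the full variety $LC(X)$ was shown to be Cohen-Macaulay using \eqref{BR-formula}. Now $LC(X)=Y_0\cup LC(X)^{-}$, where $Y_0=\overline{N^*X_0}$ is the conormal of the open stratum $X_0=\C^n\setminus X$. The component $Y_0$ is the closure of the zero section over $X_0$, hence is smooth (thus Cohen-Macaulay) of dimension $n$, and meets $LC(X)^{-}$ in dimension strictly less than $n$. I would then invoke the fact that if a reduced equidimensional Cohen-Macaulay variety decomposes as a union of two equidimensional pieces, one of which is Cohen-Macaulay, then the other piece is Cohen-Macaulay provided the intersection has codimension one in each — this is exactly the situation controlled by the exact sequence relating the structure sheaves of $LC(X)$, $Y_0$, $LC(X)^{-}$ and their intersection. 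Translating this into the algebraic language, the relation $\mu_{BR}(f,X)=\mu(f)+\mu_{BR}^{-}(f,X)$, established above as a consequence of \eqref{essa}, together with Cohen-Macaulayness of $LC(X)$ and the smoothness of $Y_0$ (whose multiplicity contribution is exactly $\mu(f)$), forces the length on the $LC(X)^{-}$ side to be conserved, which is Cohen-Macaulayness of $LC(X)^{-}$.

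The main obstacle I anticipate is the passage from the numerical conservation statement to the genuine commutative-algebra assertion of Cohen-Macaulayness: one must argue carefully that the additivity $\mu_{BR}(f,X)=\mu(f)+\mu_{BR}^{-}(f,X)$ of lengths, holding for all finitely determined $f$, is not merely a numerical coincidence but reflects the splitting of the Cohen-Macaulay module structure along the decomposition $LC(X)=Y_0\cup LC(X)^{-}$. The delicate point is controlling the behaviour along the intersection $Y_0\cap LC(X)^{-}$ and verifying that no embedded components arise on $LC(X)^{-}$; here I would use that the conservation of $\mu_{BR}^{-}(f,X)$ for every deformation $f_t$, guaranteed by \eqref{BR-formula3} since each summand $\mu(f^{-1}(0)\cap X,0)$ and $\mu(X,0)-\tau(X,0)$ is conserved, is precisely the criterion (in the sense of \cite{bruce roberts}) equivalent to the Cohen-Macaulayness of $LC(X)^{-}$, thereby closing the argument.
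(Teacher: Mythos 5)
Your proposal is correct and, in its operative part, coincides with the paper's proof: both use the already established Cohen--Macaulayness of $LC(X)$ to write $\mu_{BR}(f,X)=\sum_{i}m_{i}n_{i}$ with the component $Y_{0}$ contributing exactly $\mu(f)$, combine this with the additivity $\mu_{BR}(f,X)=\mu(f)+\mu_{BR}^{-}(f,X)$ coming from \eqref{essa}, and conclude that $\mu_{BR}^{-}(f,X)=\sum_{i\neq 0}m_{i}n_{i}$, which by the Bruce--Roberts criterion (their Proposition 5.11) gives Cohen--Macaulayness of $LC(X)^{-}$. The auxiliary geometric claim you invoke (that a Cohen--Macaulay equidimensional union of two pieces, one of which is Cohen--Macaulay and which meet in codimension one, forces the other piece to be Cohen--Macaulay) is not a theorem in that generality and is in any case unnecessary, since the numerical conservation argument you give afterwards is what actually closes the proof, exactly as in the paper.
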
 
\begin{proof}
We consider $(0,p)\in LC(X)^{-}$, then $(0,p)\in LC(X)$ and there exists $f\in\mathcal{O}_{n}$ such that $df(0)=p$. In \cite{nunoballesteros oreficeokamoto limapereira tomazela} we proved that $LC(X)$ is Cohen-Macaulay. Therefore, by \cite[Proposition 5.8]{bruce roberts}, $$\mu_{BR}(f,X)=\sum_{i=0}^{k+1}m_{i}n_{i}=m_{0}n_{0}+\sum_{i=1}^{k+1}m_{i}n_{i}=\mu(f)+\sum_{i=1}^{k+1}m_{i}n_{i}.$$  
where $n_{i}$ is the number of critical points of a Morsification of $f$ in $X_{i}$ and $m_{i}$ is the multiplicity of irreducible component $Y_{i}$. Thus, $$\mu_{BR}^{-}(f,X)=\mu_{BR}(f,X)-\dim_{\C}\frac{df(\Theta_{X}^{-})}{df(\Theta_{X})}=\mu_{BR}(f,X)-\mu(f)=\sum_{i=1}^{k+1}m_{i}n_{i}.$$
 and by \cite[Proposition 5.11]{bruce roberts} we obtain that $LC(X)^{-}$ is Cohen-Macaulay.
\end{proof}

\begin{remark} We remark that in the proof of the previous proposition we just used that if $(X,0)\subset(\C^{n},0)$ is a hypersurface such that $\dim_{\C}df(\Theta_{X}^{-})/df(\Theta_{X})=\mu(f)$ for all $f$ $\mathcal{R}_{X}$ finitely determined then  $LC(X)^{-}$ is Cohen-Macaulay if and only if $LC(X)$ is Cohen-Macaulay.   \end{remark}

\section{Polar Curves and Logarithmic Characteristic Varieties}

It is important to know if the logarithmic characteristic variety of an analytic variety is Cohen-Macaulay. In \cite{nunoballesteros oreficeokamoto limapereira tomazela} we showed that this is the case for IHS. For non-isolated singularities it is an open problem. 
In this section we give one more step in order to solve it: we study the polar curve and the relative polar curve of a holomorphic function germ over a holonomic analytic variety. We show that these curves are Cohen-Macaulay if and only if the logarithmic characteristic variety and the relative logarithmic characteristic variety (respectively) are Cohen-Macaulay.
As a consequence, we have the principle of conservation for the Bruce-Roberts number. 

\begin{defi}\label{curva polar}
Let $f\in\mathcal{O}_{n}$ be a $\mathcal{R}_{X}$-finitely determined function germ and %
$F:(\C^{n}\times\C,0)\to(\C,0)$, $F(t,x)=f_t(x)$,
a 1-parameter deformation of $f$. The \emph{polar curve} of $F$ in $(X,0)$ is
$$C=\{(x,t)\in\C^{n}\times\C;\;df_{t}(\delta_{i}(x))=0,\;\forall i=1,...,m\},$$
where $\Theta_{X}=\langle\delta_{1},...,\delta_{m}\rangle$.
\end{defi}

In \cite{ahmed cidinha tomazella} it was proved that if $LC(X)$ is Cohen-Macaulay then the polar curve $C$ is Cohen-Macaulay.


\begin{prop}
Let $(X,0)$ be a holonomic analytic variety. If any $\mathcal{R}_{X}$-finitely determined function germ has a Morsification whose polar curve is Cohen-Macaulay then $LC(X)$ is Cohen-Macaulay. 
%
\end{prop}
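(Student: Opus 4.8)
The plan is to read Cohen-Macaulayness of the polar curve as flatness of its projection to the parameter axis, convert that flatness into a conservation-of-multiplicity formula for $\mu_{BR}(f,X)$, and recognize the resulting formula as exactly the Bruce-Roberts criterion for $LC(X)$ to be Cohen-Macaulay. First I would fix an arbitrary $\mathcal{R}_X$-finitely determined germ $f$, so that $\mu_{BR}(f,X)<\infty$, and take a Morsification $f_t$ whose polar curve $C\subset(\C^n\times\C,0)$ (Definition \ref{curva polar}) is Cohen-Macaulay. Consider the projection $\pi\colon C\to(\C,0)$, $\pi(x,t)=t$. Its special fibre is $C_0=V(df(\Theta_X))$, which is the origin because $\mu_{BR}(f,X)<\infty$; scheme-theoretically $\mathcal{O}_{C_0}=\mathcal{O}_n/df(\Theta_X)$, so $\dim_{\C}\mathcal{O}_{C_0}=\mu_{BR}(f,X)$. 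Since $\dim C=1$ and $t$ is a parameter (its zero locus on $C$ is $0$-dimensional), Cohen-Macaulayness of $C$ forces $t$ to be a nonzerodivisor in $\mathcal{O}_{C,0}$; hence $\mathcal{O}_{C,0}$ is torsion-free over $\C\{t\}$ and $\pi$ is flat.

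Next I would exploit flatness to conserve colength. For a flat finite family of $0$-dimensional schemes the total length of the fibre is locally constant, so
\[
\mu_{BR}(f,X)=\dim_{\C}\mathcal{O}_{C_0}=\sum_{q\in C_t}\dim_{\C}\mathcal{O}_{C_t,q}
\]
for small $t\ne0$. The fibre is $C_t=\sigma_{f_t}^{-1}(LC(X))$, where $\sigma_{f_t}(x)=(x,df_t(x))$ is the graph section, so each $q\in C_t$ is a point where $\sigma_{f_t}$ meets $LC(X)$. By the Morse condition these points lie on the regular parts of the logarithmic strata $X_\alpha$; there the section is transverse to the smooth germ $\overline{N^*X_\alpha}=Y_\alpha$, and the local intersection number $\dim_{\C}\mathcal{O}_{C_t,q}$ equals the multiplicity $m_\alpha$ of $LC(X)$ along $Y_\alpha$. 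Grouping the $n_\alpha$ critical points on each stratum gives $\sum_{q}\dim_{\C}\mathcal{O}_{C_t,q}=\sum_{\alpha=0}^{k+1}m_\alpha n_\alpha$, whence $\mu_{BR}(f,X)=\sum_\alpha m_\alpha n_\alpha$.

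Finally, since the hypothesis supplies such a Morsification for every $\mathcal{R}_X$-finitely determined $f$, in particular for one whose graph section is generic enough, the identity $\mu_{BR}(f,X)=\sum_\alpha m_\alpha n_\alpha$ is precisely the input required by the Bruce-Roberts criterion \cite[Proposition 5.11]{bruce roberts}, which then yields that $LC(X)$ is Cohen-Macaulay. This mirrors the argument used above for $LC(X)^-$, run in the reverse direction.

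The step I expect to be hardest is the identification of the local colengths $\dim_{\C}\mathcal{O}_{C_t,q}$ with the multiplicities $m_\alpha$: one must check that a Morsification genuinely restricts to a Morse function on each logarithmic stratum with graph section transverse to $LC(X)$ away from $0$, so that the only contributions to the fibre length are the $m_\alpha$ along the $Y_\alpha$. Dually, I must make sure that the flatness extracted from Cohen-Macaulayness of $C$ delivers the \emph{exact} equality in the conservation formula with no excess contribution concentrated at the origin. Verifying that \cite[Proposition 5.11]{bruce roberts} applies in the stated form to the full variety $LC(X)$, rather than only to $LC(X)^-$, is the remaining point to confirm.
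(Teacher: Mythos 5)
Your proposal is correct and follows essentially the same route as the paper: Cohen--Macaulayness of the polar curve gives flatness of the projection to the $t$-axis (the paper's ``principle of conservation of number''), the local contributions at Morse points are identified with the multiplicities $m_\alpha$ (Bruce--Roberts, Proposition 5.12/5.2), and the resulting identity $\mu_{BR}(f,X)=\sum_\alpha m_\alpha n_\alpha$ feeds into the Bruce--Roberts criterion. The only minor slip is the final citation: for the full variety $LC(X)$ the relevant criterion is \cite[Proposition 5.8]{bruce roberts}, while Proposition 5.11 is the analogous statement for $LC(X)^-$.
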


\begin{proof}

 Let $(0,p)\in LC(X)$, then there exists an $\mathcal{R}_{X}$-finitely determined function germ $f\in\mathcal{O}_{n},$ such that $df(0)=p$.
Let $F:(\C^{n}\times\C)\to(\C,0)$, $F(x,t)=f_{t}(x)$,
be a Morsification of $f$. 
By hypothesis $\mathcal{O}_{n+1}/df_{t}(\Theta_{X})$ is Cohen-Macaulay of dimension 1, then by the principle of conservation of number $$\mu_{BR}(f,X)=\sum_{i=0}^{k+1}\sum_{x\in\Sigma f_{t}\cap X_{i}}\dim_{\C}\frac{\mathcal{O}_{n,x}}{df_{t}(\Theta_{X,x})}=\sum_{i=0}^{k+1}\sum_{x\in\Sigma f_{t}\cap X_{i}}m_{i}=\sum_{i=0}^{k+1}n_{i}m_{i}$$
because  if $x\in X_{i}$ is a Morse critical point of $f_{t}$, then $\mu_{BR}(f_{t},X)_{x}=m_{i}$, and by \cite[Proposition 5.8]{bruce roberts},  $LC(X)$ is Cohen-Macaulay.

\end{proof}

When $LC(X)$ is Cohen-Macaulay we have 

$$\mu_{BR}(f,X)=\sum_{x\in\C^{n}}\mu_{BR}(f_{t},X)_{x},$$ where $f_{t}$ is any 1-parameter deformation of $f$.

Our purpose now is to prove similar results for $LC(X)^{-}$. We define the \emph{relative polar curve} by $$C^{-}=\{(x,t)\in C;\;x\in X\},$$
where $C$ is the polar curve of $F$ in $(X,0).$


The prove of the next proposition is similar to the one of \cite[Theorem 3.7]{ahmed cidinha tomazella}.
\begin{prop}\label{vai}
Let $(X,0)$ be a holonomic analytic variety. If $LC(X)^-$ is Cohen-Macaulay  then the relative polar curve of any 1-parameter deformation 
of any $\mathcal{R}_{X}$-finitely determined function germ is Cohen-Macaulay.
\end{prop}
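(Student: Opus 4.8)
The plan is to realize the relative polar curve $C^{-}$ as the preimage of $LC(X)^{-}$ under a graph map and then to cut it out of a Cohen-Macaulay germ by a regular sequence, following the argument of \cite[Theorem 3.7]{ahmed cidinha tomazella}. Work in the space $\C^{n}_{x}\times\C^{n}_{\xi}\times\C_{t}$, where $(x,\xi)$ are coordinates on $T^{*}\C^{n}$ and $t$ is the deformation parameter, and write $\partial f_{t}/\partial x_{j}$ for the partial derivatives of $F(x,t)=f_{t}(x)$. I would first check that the projection $(x,\xi,t)\mapsto(x,t)$ restricts to an isomorphism from the germ
\[
\Gamma^{-}=\left\{(x,\xi,t)\ :\ (x,\xi)\in LC(X)^{-},\ \xi_{j}=\tfrac{\partial f_{t}}{\partial x_{j}}(x),\ j=1,\dots,n\right\}
\]
onto $C^{-}$. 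Indeed, the equations $\xi_{j}=\partial f_{t}/\partial x_{j}$ determine $\xi$ from $(x,t)$, and after this substitution the condition $(x,d_{x}f_{t}(x))\in LC(X)^{-}$ becomes $df_{t}(\delta_{i}(x))=0$ together with $x\in X$: each component $\overline{N^{*}X_{i}}$ of $LC(X)^{-}$ (with $i\ge1$) projects into $\overline{X_{i}}\subseteq X$, while over a point $x\in X$ the relevant covectors of $LC(X)$ lie in $LC(X)^{-}$. Thus $\Gamma^{-}\cong C^{-}$ set-theoretically, and the first point to verify carefully is that this identification matches the scheme structures.

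Next set $W^{-}=LC(X)^{-}\times\C_{t}$. By hypothesis $LC(X)^{-}$ is Cohen-Macaulay, and since it has pure dimension $n$ and taking a product with the smooth factor $\C_{t}$ preserves Cohen-Macaulayness, $W^{-}$ is Cohen-Macaulay of dimension $n+1$. The germ $\Gamma^{-}$ is obtained from $W^{-}$ by imposing the $n$ functions
\[
g_{j}=\xi_{j}-\tfrac{\partial f_{t}}{\partial x_{j}},\qquad j=1,\dots,n,
\]
each of which lies in the maximal ideal, since $g_{j}$ vanishes at the central point $(0,df(0),0)$.

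It then remains to control the dimension. Cutting $W^{-}$ by the $n$ functions $g_{1},\dots,g_{n}$ lowers the dimension by at most $n$, so $\dim\Gamma^{-}\ge1$ by Krull's principal ideal theorem. On the other hand, since $f$ is $\mathcal{R}_{X}$-finitely determined we have $\mu_{BR}(f,X)<\infty$, i.e. $V(df(\Theta_{X}))=\{0\}$, so the fibre of $C^{-}$ over $t=0$, namely $\{x\in X : df(\delta_{i}(x))=0\}\subseteq V(df(\Theta_{X}))=\{0\}$, is $0$-dimensional; this forces $\dim C^{-}\le1$. Hence $\dim\Gamma^{-}=\dim C^{-}=1$, so the functions $g_{1},\dots,g_{n}$ drop the dimension of $W^{-}$ by exactly $n$. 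On a Cohen-Macaulay local ring a sequence of $n$ elements of the maximal ideal whose quotient has dimension $\dim W^{-}-n$ is automatically regular, and the quotient by a regular sequence is again Cohen-Macaulay. Therefore $\mathcal{O}_{\Gamma^{-}}=\mathcal{O}_{W^{-}}/(g_{1},\dots,g_{n})$ is Cohen-Macaulay, and so is $C^{-}\cong\Gamma^{-}$.

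The main obstacle I expect is the scheme-theoretic identification $C^{-}\cong\Gamma^{-}$ together with the dimension bookkeeping; once these are settled the conclusion is the purely algebraic fact that a system of parameters on a Cohen-Macaulay germ is a regular sequence. In carrying this out one must work at the correct germ, namely at a point $(0,p)\in LC(X)^{-}$ with $p=df(0)$, and it is precisely the finiteness coming from $\mathcal{R}_{X}$-finite determinacy that guarantees the $0$-dimensionality of the special fibre and hence the regularity of $g_{1},\dots,g_{n}$.
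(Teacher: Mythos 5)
Your argument is correct and is exactly the one the paper intends: it adapts the proof of \cite[Theorem 3.7]{ahmed cidinha tomazella} by realizing $C^{-}$ as the intersection of $LC(X)^{-}\times\C_{t}$ with the graph of $(x,t)\mapsto d_{x}f_{t}(x)$, checking via $\mathcal R_X$-finite determinacy that the $n$ equations $\xi_{j}-\partial f_{t}/\partial x_{j}$ cut the dimension by exactly $n$, and invoking that such a sequence is regular on a Cohen--Macaulay germ. The paper omits the details and simply cites that reference, so your write-up matches its approach.
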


For the converse we need the following lemma, which is the analogous of \cite[Proposition 5.12]{bruce roberts} for the relative Bruce-Roberts number.
\begin{lem}\label{multiplicidade}
Let $(X,0)$ be a holonomic analytic variety and $f\in\mathcal{O}_{n}$. We assume that $f$ restricted to $(X,0)$ is a Morse function. If $x\in X$ is a critical point of $f$ then $\mu_{BR}(f,X)_{x}^{-}=m_{\alpha}$, where $m_{\alpha}$ is the multiplicity of the irreducible component $Y_\alpha$ corresponding to the logarithmic stratum $X_\alpha$ which contains $x$. 
\end{lem}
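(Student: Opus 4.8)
The plan is to reduce the statement to a local computation at the Morse critical point $x$ and then to invoke the multiplicity interpretation of $m_\alpha$ as in Bruce--Roberts. Since $f|_X$ is a Morse function, the critical point $x$ lies on a single logarithmic stratum $X_\alpha$, and near $x$ the pair $(X_\alpha,x)$ is smooth; the Morse condition means $f$ restricted to $X_\alpha$ has a nondegenerate critical point at $x$. First I would localize: because $\mu_{BR}^-(f,X)$ is defined as $\dim_\C \mathcal O_n/(df(\Theta_X)+I_X)$, its localization at $x$ is $\mu_{BR}^-(f,X)_x=\dim_\C \mathcal O_{n,x}/(df(\Theta_{X,x})+I_{X,x})$, so I only need to understand the germ of $\Theta_X$ and of $I_X$ at $x$.

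The key step is to relate this local relative Bruce--Roberts number to the multiplicity $m_\alpha$ of the component $Y_\alpha=\overline{N^*X_\alpha}$ of $LC(X)$. The multiplicity $m_\alpha$ is, by definition, the multiplicity of the irreducible component of $LC(X)$ lying over the generic point of $X_\alpha$; equivalently, it is the degree of the fibre of $Y_\alpha\to X_\alpha$ over a generic point, or the length of the local ring of $Y_\alpha$ along that fibre. I would compute $\mu_{BR}^-(f,X)_x$ directly by choosing adapted coordinates in which $X_\alpha=\{x_1=\dots=x_c=0\}$ near $x$ (where $c=\operatorname{codim} X_\alpha$). In such coordinates one can write down generators of $\Theta_{X,x}$ explicitly, and the condition $(0,p)=(x,df(x))\in LC(X)$ together with the Morse hypothesis pins down how $df$ meets the conormal directions; the colength $\dim_\C \mathcal O_{n,x}/(df(\Theta_{X,x})+I_{X,x})$ should then equal the intersection multiplicity that defines $m_\alpha$.

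The cleanest route is likely to parallel the proof of \cite[Proposition 5.12]{bruce roberts}, which establishes the analogous statement $\mu_{BR}(f,X)_x=m_\alpha$ for the full (non-relative) Bruce--Roberts number in the holonomic setting. There one shows that at a Morse critical point the local Bruce--Roberts number equals the multiplicity of $LC(X)$ along $Y_\alpha$ by a transversality argument: the section $x\mapsto(x,df(x))$ meets $LC(X)$ transversally (in the appropriate sense) at $(x,p)$, so the local intersection number is exactly $m_\alpha$. For the relative version I would run the same argument but replace $LC(X)$ by $LC(X)^-=\bigcup_{i\ge1}Y_i$ and add the ideal $I_X$, which geometrically restricts the base to $X$ itself, i.e. discards the contribution of the open stratum $X_0$. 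Since a Morse critical point $x$ of $f|_X$ lies on some $X_\alpha$ with $\alpha\ne0$, only the component $Y_\alpha$ contributes, giving $\mu_{BR}^-(f,X)_x=m_\alpha$.

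The main obstacle I expect is the bookkeeping in verifying that imposing $I_X$ precisely excludes $Y_0$ and leaves the multiplicity along $Y_\alpha$ unchanged, i.e. that passing to the relative invariant corresponds exactly to intersecting with the base locus $X$ and that this intersection remains transverse at the Morse point. Concretely, one must check that $df(\Theta_{X})+I_X$ localized at $x$ has colength equal to the local multiplicity of $Y_\alpha$ and not some larger number coming from an unexpected tangency between the graph of $df$ and the conormal variety along $X$. Establishing this transversality from the Morse hypothesis, and confirming that the length computation agrees with the definition of $m_\alpha$ used in \cite{bruce roberts}, is the technical heart of the argument; the rest is a routine adaptation of the cited proposition.
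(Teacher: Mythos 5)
Your overall strategy --- localize at $x$, invoke the multiplicity interpretation of $m_\alpha$, and lean on \cite[Proposition 5.12]{bruce roberts} --- points in the right direction, but the step you yourself flag as ``the technical heart'' (that adding $I_X$ exactly discards the contribution of $Y_0$ and leaves the multiplicity along $Y_\alpha$ untouched) is precisely what is missing from your argument, and the way you propose to supply it (adapted coordinates for $X_\alpha$, explicit generators of $\Theta_{X,x}$, a transversality computation for the graph of $df$) is not really viable for a general holonomic $(X,0)$: the module $\Theta_{X,x}$ depends on the whole germ of $X$ at $x$, not just on the smooth stratum $X_\alpha$, so there is no normal form to compute with, and re-running the intersection-theoretic argument of \cite[Proposition 5.12]{bruce roberts} with the extra ideal $I_X$ is exactly the hard part you have deferred rather than done.

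The paper closes this gap with a much simpler observation that your sketch does not contain. Because $f|_X$ is Morse at $x$, the covector $p=df(x)$ is nonzero and $(x,p)$ lies in $Z_\alpha=Y_\alpha\setminus\bigcup_{j\neq\alpha}Y_j$; in particular $(x,p)\notin Y_0$. But $Y_0$ is the closure of the conormal of the open stratum, i.e.\ the zero section, whose ideal is $I_0=\langle p_1,\dots,p_n\rangle$, and this is the unit ideal at any point off the zero section. Hence the defining ideals $I=I_0\cap I_1\cap\dots\cap I_{k+1}$ of $LC(X)$ and $I^-=I_1\cap\dots\cap I_{k+1}$ of $LC(X)^-$ have the same localization at $(x,p)$, so $LC(X)$ and $LC(X)^-$ coincide as germs of complex spaces there. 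No new transversality or length computation is needed: Cohen--Macaulayness of $LC(X)$ at points of $Z_i$, $i\geq 1$ (\cite[Proposition 5.12]{bruce roberts}) transfers verbatim to $LC(X)^-$, and then \cite[Propositions 5.11 and 5.2]{bruce roberts} give $\mu_{BR}(f,X)_{x}^{-}=\sum_{i=1}^{k+1}m_i n_i=m_\alpha$ directly. Until you either carry out the transversality verification you postpone or replace it by this local-coincidence argument, your proof is incomplete.
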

\begin{proof}
Let $Z_{i}=Y_{i}\setminus\bigcup_{j\neq i}Y_{j}$ where $Y_{i}$ are the irreducible components of $LC(X)$. We know from \cite[Proposition 5.12]{bruce roberts} that  $LC(X)$ is Cohen-Macaulay at points in $Z_{i}$, $i=1,...,k+1$. We see that $LC(X)^-$ coincides locally with $LC(X)$ and hence, $LC(X)^-$ is also Cohen-Macaulay at points in $Z_{i}$, $i=1,...,k+1$.

In fact, let $(0,p)\in Z_{i}$ with $i\neq 0$, then $(x,p)\not\in Y_{0}$. Let $V:= T^{*}\C^{n}\setminus Y_{0}$, which is an open neighborhood of $(x,p)$. Obviously, we have the equality of sets $$LC(X)\cap V=LC(X)^{-}\cap V.$$
Moreover, let $I,\;I^{-}$ and $I_{j}$ be the ideals which define $LC(X)$, $LC(X)^{-}$ and $Y_{j}$, $j=0,...,k+1$, respectively. Then, 
$$I=I_{0}\cap I_{1}\cap...\cap I_{k+1}, \; I^{-}=I_{1}\cap...\cap I_{k+1}\text{ and }I_{0}=\langle p_{1},...,p_{n}\rangle.$$
Since $p\neq 0$, $I_{0}$ is the total ring at $(x,p)$, so we have an equality between germs of complex spaces. 

Finally, we have
$$\mu_{BR}(f,X)_{x}^{-}\stackrel{(*)}=\sum_{i=1}^{k+1}m_{i}n_{i}\stackrel{(**)}=m_{\alpha}.$$
The equalities $(*)$ and $(**)$ are consequences of \cite[Propositions 5.11 and 5.2]{bruce roberts}, respectively.

\end{proof}
We are ready now to prove the converse of the Proposition \ref{vai}.
\begin{prop}
Let $(X,0)$ be a holonomic analytic variety. If the relative polar curve of any 1-parameter deformation of any $\mathcal{R}_{X}$-finitely determined function germ is Cohen-Macaulay then $LC(X)^{-}$ is Cohen-Macaulay. 
\end{prop}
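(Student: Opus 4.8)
The plan is to mirror the argument given above for $LC(X)$, replacing the Bruce-Roberts number by its relative version throughout and invoking Lemma \ref{multiplicidade} in place of \cite[Proposition 5.12]{bruce roberts}. First I would fix a point $(0,p)\in LC(X)^{-}$. Since $LC(X)^{-}\subset LC(X)$, this is also a point of $LC(X)$, so by the same reasoning as in the proof for $LC(X)$ there is an $\mathcal{R}_{X}$-finitely determined germ $f\in\mathcal{O}_{n}$ with $df(0)=p$; I then choose a Morsification $F(x,t)=f_{t}(x)$ of $f$ with $f_{0}=f$.

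Next I would identify the relevant one-dimensional deformation. The relative polar curve $C^{-}$ is cut out inside $\C^{n}\times\C$ by $df_{t}(\Theta_{X})+I_{X}$, so its local ring is $\mathcal{O}_{n+1}/(df_{t}(\Theta_{X})+I_{X})$; by hypothesis this is Cohen-Macaulay of dimension one. The principle of conservation of number then applies: the colength of the fibre over $t=0$, which is exactly $\mu_{BR}^{-}(f,X)=\dim_{\C}\mathcal{O}_{n}/(df(\Theta_{X})+I_{X})$, equals the sum of the local relative Bruce-Roberts numbers $\mu_{BR}^{-}(f_{t},X)_{x}$ over the critical points $x$ of $f_{t}$ in a nearby fibre.

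The crucial simplification, and the step where I would be most careful, is the bookkeeping over the logarithmic strata. At a point $x\in X_{0}=\C^{n}\setminus X$ the germ $\phi$ is a unit, so $I_{X}$ is the whole local ring there and $\mu_{BR}^{-}(f_{t},X)_{x}=0$; thus the stratum $X_{0}$ contributes nothing, which is precisely the phenomenon that distinguishes the relative theory. For a Morse critical point $x$ on a stratum $X_{i}$ with $i\neq0$, Lemma \ref{multiplicidade} gives $\mu_{BR}^{-}(f_{t},X)_{x}=m_{i}$. Summing, I would obtain
$$\mu_{BR}^{-}(f,X)=\sum_{i=1}^{k+1}\sum_{x\in\Sigma f_{t}\cap X_{i}}\mu_{BR}^{-}(f_{t},X)_{x}=\sum_{i=1}^{k+1}n_{i}m_{i},$$
where $n_{i}$ denotes the number of critical points of $f_{t}$ on $X_{i}$.

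This is exactly the numerical criterion of \cite[Proposition 5.11]{bruce roberts}, from which I conclude that $LC(X)^{-}$ is Cohen-Macaulay. The main obstacle I anticipate is not a single deep fact but the correct application of conservation of number to $\mathcal{O}_{n+1}/(df_{t}(\Theta_{X})+I_{X})$: verifying that this module has pure dimension one and that passing to the parameter line yields the stated equality of colengths, while making sure that the vanishing contribution from $X_{0}$ is genuinely accounted for by the triviality of $I_{X}$ off $X$.
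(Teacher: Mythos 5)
Your proposal is correct and follows essentially the same route as the paper's proof: pick $f$ with $df(0)=p$, Morsify, use Cohen--Macaulayness of the relative polar curve (whose local ring is $\mathcal{O}_{n+1}/(df_t(\Theta_X)+I_X)=\mathcal{O}_{n+1}/df_t(\Theta_X^-)$) to invoke conservation of number, identify the local contributions as the multiplicities $m_i$ via Lemma \ref{multiplicidade}, and conclude with \cite[Proposition 5.11]{bruce roberts}. Your explicit remark that the stratum $X_0$ contributes nothing because $I_X$ is the unit ideal off $X$ is a welcome clarification of a point the paper leaves implicit by simply starting the sum at $i=1$.
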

\begin{proof}
Let $(0,p)\in LC(X)^{-}$, then there exists an $\mathcal{R}_{X}$-finitely determined function germ $f\in\mathcal{O}_{n},$ such that $df(0)=p$. Let $F:(\C^{n}\times\C,0)\to(\C,0)$ be a Morsification of $f$ and set $f_{t}(x)=F(x,t)$. 

By hypothesis $\mathcal{O}_{n+1}/df_{t}(\Theta_{X}^{-})$ is Cohen-Macaulay of dimension 1. By the principle of the conservation of the multiplicity,
 $$\dim_{\C}\frac{\mathcal{O}_{n}}{df(\Theta_{X}^{-})}=\sum_{i=1}^{k+1}\sum_{x\in\Sigma f\cap X_{i}}\dim_{\C}\frac{\mathcal{O}_{n,x}}{df_{t}(\Theta_{X,x}^{-})}=\sum_{i=1}^{k+1}\sum_{x\in\Sigma f\cap X_{i}}m_{i}=\sum_{i=1}^{k+1}n_{i}m_{i},$$
because  if $x\in X_{i}$ is a Morse critical point of $f_{t}$, then $\mu_{BR}(f_{t},X)^{-}_{x}=m_{i}$ by  Lemma \ref{multiplicidade}. By \cite[Proposition 5.11]{bruce roberts},  $LC(X)^{-}$ is Cohen-Macaulay.
\end{proof}

As consequence of the previous result,

 $$\mu_{BR}^{-}(f,X)=\sum_{x\in\C^{n}}\mu_{BR}^{-}(f_{t},X)_{x},$$ where $f_{t}$ is any 1-parameter deformation of $f$.

\section{An example with non-isolated singularities}

Given natural numbers $0< k\leq n$, we can see $\mathcal O_k$ as a subring of $\mathcal O_n$ and $\Theta_{k}$ as a subset of $\Theta_{n}$. 
We fix $(x_1,\dots,x_n)$ a system of coordinates in $\mathcal O_n$ and we use $(x_1,\dots, x_k)$ as the coordinate system of $\mathcal O_k$ and $(x_{k+1},\dots,x_n)$ as the one in $\mathcal O_{n-k}$.

Let $(X,0)\subset (\C^{k},0)$ an analytic variety. We denote by $(\tilde{X},0)\subset(\C^{n},0)$ the inclusion of $(X,0)$ in $(\C^{n},0).$ Then 
$\Theta_{\tilde{X}}=\mathcal{O}_{n}\Theta_{X}+\langle \tfrac{\partial}{\partial x_{k+1}},...,\tfrac{\partial}{\partial x_{n}}\rangle$ and $LC(\tilde{X})=LC(X)\times\C^{n-t}.$ 

Consequently if $LC(X)$ is Cohen-Macaulay then $LC(\tilde{X})$ is Cohen-Macaulay. 
In particular, if $(X,0)$ is an IHS then $LC(\tilde{X})$ is Cohen-Macaulay.

%
%
%
%
Let $F\in\mathcal{O}_{n}$ a function germ with isolated singularity such that
$F=f+g$ with $f\in\mathcal{O}_{k}$ and $g\in\mathcal{O}_{n-k}$. It is known by Sebastiani and Thom, \cite{sebastiani thom}, that $\mu(F)=\mu(f)\mu(g)$.
We prove a similar result for the Bruce-Roberts number, $$\mu_{BR}(F,\tilde{X})=\mu(g)\mu_{BR}(f,X).$$ 
%
%
%

\begin{prop}
Let $I$ and $J$ be ideals in $\mathcal{O}_{k}$ and $\mathcal{O}_{n-k}$, respectivelly. If we denote by $I'=I\mathcal O_n$ and $J'=J\mathcal O_n$ the respective induced ideals in $\mathcal O_n$, then
$$\dim_{\C}\frac{\mathcal{O}_{n}}{I'+J'}<\infty\;\mbox{if and only if}\;\dim_{\C}\frac{\mathcal{O}_{k}}{I}<\infty\textup{ and }\dim_{\C}\frac{\mathcal{O}_{n-k}}{J}<\infty.$$
Moreover, if these dimensions are finite then
 $$\dim_{\C}\frac{\mathcal{O}_{n}}{I'+J'}=\left(\dim_{\C}\frac{\mathcal{O}_{k}}{I}\displaystyle\right)\left(\dim_{\C}\frac{\mathcal{O}_{n-k}}{J}\right).$$     
\end{prop}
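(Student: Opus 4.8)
The plan is to reduce this statement about the tensor-product-type quotient $\mathcal O_n/(I'+J')$ to the finiteness and dimension of the factor quotients by exploiting the separation of variables. First I would observe that since $I'$ is generated by elements of $\mathcal O_k$ (involving only $x_1,\dots,x_k$) and $J'$ by elements of $\mathcal O_{n-k}$ (involving only $x_{k+1},\dots,x_n$), the quotient $\mathcal O_n/(I'+J')$ should be understood via a completed tensor product. The key algebraic fact to set up is the isomorphism
\[
\frac{\mathcal O_n}{I'+J'}\approx \frac{\mathcal O_k}{I}\,\widehat{\otimes}_{\C}\,\frac{\mathcal O_{n-k}}{J},
\]
or, to avoid completion subtleties, I would argue directly that a $\C$-basis of the product is given by products of basis elements of the two factors.

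For the finiteness equivalence, the forward direction follows because $\mathcal O_k/I$ and $\mathcal O_{n-k}/J$ each embed (as subquotients) into $\mathcal O_n/(I'+J')$, so finite-dimensionality of the latter forces finite-dimensionality of each factor. For the converse, I would use that finite-dimensionality of $\mathcal O_k/I$ means $I$ is $\mathfrak m_k$-primary, hence contains a power $\mathfrak m_k^a$, and similarly $J\supset \mathfrak m_{n-k}^b$; then $I'+J'$ contains $\mathfrak m_k^a\mathcal O_n + \mathfrak m_{n-k}^b\mathcal O_n$, which is $\mathfrak m_n$-primary, giving finiteness of the quotient. The dimension formula is the heart of the matter: I would pick monomial bases. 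If $\{m_1,\dots,m_p\}$ are monomials in $x_1,\dots,x_k$ descending to a basis of $\mathcal O_k/I$, and $\{m'_1,\dots,m'_q\}$ are monomials in $x_{k+1},\dots,x_n$ descending to a basis of $\mathcal O_{n-k}/J$, then I claim the products $\{m_i m'_j\}$ descend to a basis of $\mathcal O_n/(I'+J')$.

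To prove that the products span, I would take any $h\in\mathcal O_n$, expand it as a power series, group terms as $\sum_\beta c_\beta(x_1,\dots,x_k)\, x^\beta$ where $\beta$ ranges over multi-indices in the last $n-k$ variables, reduce each coefficient $c_\beta$ modulo $I$ using the $m_i$, then reduce the remaining $x$-powers modulo $J$ using the $m'_j$; convergence of these reductions follows from the $\mathfrak m$-primary (hence Artinian) situation established above. For linear independence, suppose $\sum_{i,j} a_{ij} m_i m'_j \in I'+J'$; I would project onto the factors or use a lifting/splitting argument to separate the $I'$ and $J'$ contributions and conclude all $a_{ij}=0$. The main obstacle I anticipate is precisely this linear-independence step, since $I'+J'$ is a sum rather than a direct summand, so one cannot naively separate an element of $I'+J'$ into its two pieces; I would handle it by noting that modulo $J'$ the ambient ring is $(\mathcal O_k/I)\otimes_\C(\mathcal O_{n-k}/J)$-flat in the relevant sense, or more concretely by reducing everything modulo $\mathfrak m_k^a+\mathfrak m_{n-k}^b$ to work in a genuine finite tensor product of Artinian algebras, where the equality $\dim_\C(A\otimes_\C B)=(\dim_\C A)(\dim_\C B)$ is standard.
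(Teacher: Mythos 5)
Your proposal is correct, and its operative argument is essentially the paper's: the decisive step in both is to quotient by a suitable power of the maximal ideal so that everything lives in a tensor product of finite-dimensional (Artinian) algebras, where $\dim_\C(A\otimes_\C B)=(\dim_\C A)(\dim_\C B)$ is standard — the paper does this via $(r-1)$-jets modulo $\mathcal M_n^r$, you via reduction modulo $\mathfrak m_k^a\mathcal O_n+\mathfrak m_{n-k}^b\mathcal O_n$, which is the same device. The only cosmetic difference is in the finiteness equivalence, where the paper argues with $V(I'+J')=V(I)\times V(J)$ while you use that the two factors are subquotients of $\mathcal O_n/(I'+J')$; both are fine.
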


\begin{proof}
The equivalence follows from
$$ V(I')=V(I)\times\C^{n-t},\; V(J')=\C^{t}\times V(J)\textup{ and } V(I'+J')=V(I)\times V(J).$$
For the equality, by hypothesis there exist positive integer numbers $k',\;k_{i}$ and $k_{j}$ such that $$\mathcal{M}_{n}^{k'}\subset I'+J',\;\mathcal{M}_{k}^{k_{i}}\subset I,\;\mathcal{M}_{n-k}^{k_{j}}\subset J,$$ 
where $\mathcal M_\ell$ is the maximal ideal of $\mathcal O_\ell$.
Let $r=\max\{k',\;k_{i}, \;k_{j}\}$, then

$$
\frac{\mathcal{O}_{n}}{I'+J'}\approx\frac{\frac{\mathcal{O}_{n}}{\mathcal{M}_{n}^{r}}}{\frac{I'+J'}{\mathcal{M}_{n}^{r}}}=\frac{\frac{\C[z_1,z_2]}{\mathcal{M}_{n}^{r}}}{\frac{I''+J''}{\mathcal{M}_{n}^{r}}}=\frac{\C[z_1,z_2]}{I''+J''},$$
where $z_1=(x_1,\dots,x_k)$, $z_2=(x_{k+1},\dots,x_n)$ and $I''$ and $J''$ are the ideals in $\C[z_1,z_2]$ generated by the $r-1$-jets of the generators of $I$ and $J$, respectively. Analogously,
$$\frac{\mathcal{O}_{k}}{I}\approx\frac{\C[z_1]}{I'''} \, \mbox{and} \,
\frac{\mathcal{O}_{n-t}}{J} \approx\frac{\C[z_{2}]}{J'''},$$
where $I'''$ and $J'''$ are the ideals in $\C[z_1]$ and $\C[z_2]$ generated by the $r-1$-jets of the generators of $I$ and $J$, respectively.
Finally, the equality follows from  $$\frac{\C[z_1]}{I'''}\otimes_{\C}\frac{\C[z_2]}{J'''}=\frac{\C[z_1,z_2]}{I''+J''},$$ 
where $\otimes_{\C}$ denotes the tensor product, see  \cite[Proposition 2.7.13]{greuel pfister}.
\end{proof}

We observe that the previous result gives a simpler proof to the equality of \cite{sebastiani thom} about the Milnor numbers. Finally we relate the Bruce-Roberts numbers $\mu_{BR}(F,\tilde{X})$ and $\mu_{BR}(f,X)$.
\begin{cor}
Let $(\tilde{X},0)$, and $(X,0)$ as before, and
\begin{align*}
F:(\C^n,0)&\to(\C,0),\\
(z_1,z_2)&\mapsto f(z_1)+g(z_2) 
\end{align*}
then:
\begin{enumerate}
\item[a)]$F$ is $\mathcal{R}_{\tilde{X}}$-finitely determined if, and only if, $f$ is $\mathcal{R}_{X}$-finitely determined and $g$ has isolated singularity.
\item[b)]If $F$ is $\mathcal{R}_{\tilde{X}}$-finitely determined, $\mu_{BR}(F,\tilde{X})=\mu(g)\mu_{BR}(f,X)$.
\end{enumerate}
\end{cor}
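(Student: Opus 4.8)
The plan is to reduce the corollary to the Proposition just proved, by computing the Bruce--Roberts ideal $dF(\Theta_{\tilde X})$ explicitly and recognising it as an ideal of the form $I'+J'$. The key observation is that the separated-variable form of $F$ decouples the two groups of partial derivatives. Concretely, since $F(z_1,z_2)=f(z_1)+g(z_2)$, we have $\partial F/\partial x_i=\partial f/\partial x_i$ for $i\le k$ and $\partial F/\partial x_i=\partial g/\partial x_i$ for $i>k$. Using the description $\Theta_{\tilde X}=\mathcal O_n\Theta_X+\langle\partial/\partial x_{k+1},\dots,\partial/\partial x_n\rangle$ and choosing generators $\delta_1,\dots,\delta_m$ of $\Theta_X$ over $\mathcal O_k$, each $\delta_\ell=\sum_{j=1}^k a_{\ell j}\,\partial/\partial x_j$ with $a_{\ell j}\in\mathcal O_k$, so that $dF(\delta_\ell)=\sum_{j=1}^k a_{\ell j}\,\partial f/\partial x_j=df(\delta_\ell)$ because $g$ does not depend on $x_1,\dots,x_k$. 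On the other hand $dF(\partial/\partial x_j)=\partial g/\partial x_j$ for $j=k+1,\dots,n$. Since $dF$ is $\mathcal O_n$-linear, this gives
$$dF(\Theta_{\tilde X})=\big(df(\Theta_X)\big)\mathcal O_n+(Jg)\mathcal O_n=I'+J',$$
where $I:=df(\Theta_X)\subset\mathcal O_k$ and $J:=Jg\subset\mathcal O_{n-k}$ are ideals, and hence $\mu_{BR}(F,\tilde X)=\dim_\C\mathcal O_n/(I'+J')$.

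Next I would apply the previous Proposition directly to these $I$ and $J$. By definition $\dim_\C\mathcal O_k/I=\mu_{BR}(f,X)$ and $\dim_\C\mathcal O_{n-k}/J=\mu(g)$, so the Proposition yields at once that $\mu_{BR}(F,\tilde X)=\dim_\C\mathcal O_n/(I'+J')$ is finite if and only if both $\mu_{BR}(f,X)$ and $\mu(g)$ are finite, and that in that case $\mu_{BR}(F,\tilde X)=\mu_{BR}(f,X)\,\mu(g)$. This last product formula is precisely statement b).

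Finally, for a) I would translate the three finiteness conditions through the dictionary recalled in the Introduction: $F$ is $\mathcal R_{\tilde X}$-finitely determined if and only if $\mu_{BR}(F,\tilde X)<\infty$, $f$ is $\mathcal R_X$-finitely determined if and only if $\mu_{BR}(f,X)<\infty$, and $g$ has an isolated singularity if and only if $\mu(g)=\dim_\C\mathcal O_{n-k}/Jg<\infty$. Combining these with the equivalence furnished by the Proposition gives exactly a). The only step that requires genuine care is the identity $dF(\Theta_{\tilde X})=I'+J'$; its whole content is that the separated-variable form of $F$ kills the cross terms, so that $dF$ carries the two blocks of generators of $\Theta_{\tilde X}$ onto the generators of $I'$ and of $J'$ with no interaction. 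Beyond this bookkeeping I expect no real obstacle, the product formula and the finiteness criterion being immediate consequences of the Proposition.
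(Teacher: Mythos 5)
Your proposal is correct and follows exactly the route the paper intends: the paper's proof is the one-line remark that the corollary follows from the characterisation $\Theta_{\tilde X}=\mathcal O_n\Theta_X+\langle\partial/\partial x_{k+1},\dots,\partial/\partial x_n\rangle$ together with the preceding Proposition, and your computation $dF(\Theta_{\tilde X})=\bigl(df(\Theta_X)\bigr)\mathcal O_n+(Jg)\mathcal O_n=I'+J'$ is precisely the detail being invoked. You have simply written out the bookkeeping the paper leaves implicit.
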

\begin{proof}
It is a consequence of the characterization of $\Theta_{\tilde{X}}$ and the previous theorem.
\end{proof}

\printindex

\begin{thebibliography}{20}
\addcontentsline{toc}{chapter}{Referências Bibliográficas}

\bibitem{ahmed cidinha tomazella}
  I. Ahmed, M. A. S. Ruas, J. N. Tomazella,\
 {\em Invariants of topological relative right equivalences}, Mathematical Proceedings of the Cambridge Philosophical Society (Print), 155, (2013), 1--9.
%

\bibitem{BR}
D. A. Buchsbaum, D. S. Rim,\
{\em A generalized {K}oszul complex. {II}. {D}epth and multiplicity.}
Trans. Amer. Math. Soc., 111, (1964), 197--224.

%
\bibitem{bivia ruas}
  C. Bivià-Ausina, M. A. S. Ruas,\
  {\em Mixed Bruce-Roberts number},  Proc. Edinb. Math. Soc. (2),  63, (2020), no. 2, 456--474.

%
\bibitem{brieskorn greuel}
  E. Brieskorn, G. M. Greuel,\
    \emph{Singularities of complete intersections}, Manifolds-Tokyo 1973 (Proc. Internat. Conf., Tokyo, 1973), Univ. Tokyo Press, (1975), 123--129.

\bibitem{bruce roberts}
  J. W. Bruce, R. M. Roberts,\
  {\em Critical points of functions on analytic varieties}, Topology 27, (1988), no. 1, 57--90.


\bibitem{singular}
  W. Decker, G. M. Greuel, G. Pfister, H. Sch{\"o}nemann,\ 
\newblock {\sc Singular} {4-1-1} - {A} computer algebra system for polynomial computations.
\newblock {http://www.singular.uni-kl.de} (2018).
%

%
%

\bibitem{greuel pfister} G. M. Greuel, G. Pfister,\
{\em A Singular introduction to commutative algebra.} Second extended edition. With contributions by Olaf Bachmann, Christoph Lossen and Hans Schönemann. Springer, Berlin, (2008). 

%
%
%
%
%
%
\bibitem{kourliouros}
  K. Kourliouros,\
  {\em The Milnor-Palamodov Theorem for Functions on Isolated Hypersurface Singularities.} Bull Braz Math Soc, New Series (2020). https://doi.org/10.1007/s00574-020-00209-6.
%
%



\bibitem{nunoballesteros oreficeokamoto limapereira tomazela}J. J. Nuño-Ballesteros, B. Oréfice-Okamoto, B. K. Lima-Pereira, J. N. Tomazella,\
{\em The Bruce-Roberts Number of A Function on A Hypersurface with Isolated Singularity}.
Q. J. Math. 71, (2020), no. 3, 1049--1063.



\bibitem{nunoballesteros oreficeokamoto tomazella}
  J. J. Nuño-Ballesteros, B. Oréfice-Okamoto, J. N. Tomazella,\
  {\em The Bruce-Roberts number of a function on a weighted homogeneous hypersurface,} Q. J. Math.64, (2013), no. 1, 269--280.  

%
%





\bibitem{saito}
 K. Saito,\
 {\em Theory of logarithmic differential forms and logarithmic vector fields}, J. Fac. Sci. Univ.
 Tokyo Sect. 1A Math. 27, (1980), 265--291.
%

\bibitem{sebastiani thom}
M. Sebastiani, R. Thom,\
{\em Un résultat sur la monodromie}, (French) Invent. Math. 13, (1971), 90--96. 

\bibitem{tajima}
  S. Tajima,\
  {\em On Polar Varieties, Logarithmic Vector Fields and Holonomic D-modules},  Recent development of micro-local analysis for the theory of asymptotic analysis, 41--51, RIMS Kôkyûroku Bessatsu, B40, Res. Inst. Math. Sci. (RIMS), Kyoto, (2013).
  









\end{thebibliography}
\end{document}